\documentclass{amsart}
\usepackage{amsfonts,amssymb,amsmath,a4wide}
\usepackage[active]{srcltx}
\usepackage[british]{babel}
\usepackage{hyperref}

\newcommand{\qedbox}{ \fbox{}}
\newenvironment{prooftheodiffeo}{\noindent\emph{Proof of Theorem
  \ref{diffeo}: }}{\hfill$\qedbox$}

\newenvironment{proofoflem}{\noindent\textsc{Proof of the lemma \ref{majzinf}: }}{\hfill$\qedbox$}
\newenvironment{prooftheo}{\noindent\emph{Proof of Theorems \ref{theo14} and \ref{15}: }}{\hfill$\qedbox$}

\newenvironment{proofstab}{\noindent\emph{Proof of Theorem \ref{stab}: }}{\hfill$\qedbox$}
\newenvironment{prooflambda}{\noindent\emph{Proof of Corollary \ref{lambda}: }}{\hfill$\qedbox$}

\newtheorem{theo}{Theorem}[section]
\newtheorem{lem}{Lemma}[section]
\newtheorem{pro}{Proposition}[section]

\newtheorem{rema}{Remark}[section]
\newtheorem{cor}{Corollary}[section]

\def\R{\mathbb{R}}
\def\S{\mathbb{S}}

\def\insm{\displaystyle\int_{M}} 
\def\vol{dv}
\def\volh{dv_h}
\def\Vol{V(M)}
\def\svar{(M^m,g)}
\def\var{(M^n,g)}
\def\amb{(N^{n+1},h)}
\def\la{\lambda_1(M)}
\def\lap{\lambda_1(M)}
\def\pinch{(I_{p,\varepsilon})}

\def\hc{h^2}

\def\sdl{s_{\delta}}
\def\cdl{c_{\delta}}

\def\cmu{c_{\mu}}
\def\intsdc{\displaystyle\insm \sdl^2\vol}
\def\intcdc{\displaystyle\insm \cdl^2\vol}
\def\nhinf{\|H\|_{\infty}}

\def\nhinfc{\|H\|^2_{\infty}}

\def\xt{X^T}
\def\xd{|X|^2}
\def\dxtd{\delta|\xt|^2}
\def\hxnu{H\langle X, \nu\rangle}
\def\gradm{\nabla^M }
\def\gradn{\nabla^N }

\def\delx{\delta X}

\def\xsrx{\frac{X}{|X|}}
\def\dvgce{\text{div\ }}
\def\gradmrc{|\gradm r|^2}
\def\intxd{\insm\xd\vol}
\def\nldx{\|X\|_2^2}
\def\nldxpc{\|X\|_2}
\def\nldxt{\|\xt\|_2^2}
\def\nldy{\|Y\|_2^2}
\def\nldw{\|W\|_2^2}
\def\valx{|X|}
\def\valxpidm{\valx^{1/2}}
\def\ninfx{\|X\|_{\infty}}

\def\nlups{\|\psi\|_1}

\def\ray{\sdl^{-1}\left(\frac{1}{h}\right)}
\def\sm{s_{\mu}}
\def\cm{c_{\mu}}

\def\rbfi{\overline{R}^{\phi}}
\def\Ric{\text{Ric}}

\begin{document}\larger[2]
\title[]{EIGENVALUE PINCHING AND APPLICATION TO THE STABILITY AND THE ALMOST UMBILICITY OF HYPERSURFACES}

\subjclass[2000]{53A07, 53C21}

\keywords{Spectrum, Laplacian, pinching results, 
hypersurfaces}

\author[J.-F. Grosjean, J.Roth]{J.-F. Grosjean, J.Roth}

\address[J.-F. Grosjean]{Institut \'Elie Cartan (Math\'ematiques), Universit\'e Henri Poinca\-r\'e Nancy I, B.P. 239, F-54506 Vand\oe uvre-les-Nancy cedex, France}
\email{grosjean@iecn.u-nancy.fr}

\address[J. Roth]{LAMA, Universit\'e Paris-Est - Marne-la-Vall\'ee, 5 bd Descartes, Cit\'e Descartes, Champs-sur-Marne, F-77454 Marne-la-Vall\'ee }
\email{Julien.Roth@univ-mlv.fr}

\date{\today}

\begin{abstract} In this paper we give pinching theorems for the first nonzero eigenvalue of the Laplacian on compact hypersurfaces of ambient spaces with bounded sectional curvature. As an application we deduce a rigidity result for stable constant mean curvature hypersurfaces $M$ of these spaces $N$. Indeed, we prove that if $M$ is included in a ball of radius small enough then the Hausdorff-distance between $M$ and a geodesic sphere $S$ of $N$ is small. Moreover $M$ is diffeomorphic and quasi-isometric to $S$. As other application, we obtain rigidity results for almost umbilic hypersurfaces.    

\end{abstract}

\maketitle
\tableofcontents

\section{Introduction}
One way to show that the geodesic spheres are the only stable constant mean curvature hypersurfaces of classical space forms (i.e. Euclidean space, spherical space and hyperbolic space) is to prove that there is equality in the well-known Reilly's inequality. One of the main points of the present paper is to obtain new stability results for hypersufaces immersed in more general ambient spaces by using a Reilly's inequality proved by Heintze (\cite{hei}). Note that the iso\-perimetric problem is a particular case of these stability results proved here. Indeed  compact stable constant mean curvature hypersurfaces bounding a domain appear as solution of the isoperimetric problem. We know that solutions of this problem exist on any compact Riemannian manifold and are smooth possibly up to a singular set of codimension at least $8$ (see theorem 1 of \cite{ros2}, see also \cite{mor} and \cite{nar}). Moreover, in any dimension, smooth solutions exist in a neighborhood of non-degenerate critical point of the scalar curvature (\cite{ye}).

First, let us recall Reilly's  inequality. Let $\svar$ be a compact, connected and oriented $m$-dimensional Riemannian
ma\-nifold without boundary isometrically immersed by $\phi$ in the simply connected space form $N^{n+1}(c)$ ($c=0$, $1$ ,$-1$ respectively for Euclidean space, sphere or hyperbolic space). Reilly's inequality gives an extrinsic upper bound for the first nonzero eigenvalue $\lap$ of the Laplacian of $\svar$ in term of the square of the length of the mean curvature $H$. Indeed we have
\begin{align}\label{reilly}\lap\leqslant\frac{m}{\Vol}\insm(|H|^2+c)\vol\end{align}
where $\vol$ and $V(M)$ denote respectively the Riemannian volume element and the volume of $\svar$. Moreover in the case of hypersurfaces (i.e. $m=n$), equality holds if and only if $\var$ is  immersed as a geodesic sphere of $N^{n+1}(c)$. For $c=0$ this inequality was proved by Reilly (\cite{rei}) and can ea\-sily be extended to the spherical case $c=1$ by considering the canonical embedding of $\S^n$ in $\R^{n+1}$. For $c=-1$ it has been proved by El Soufi and Ilias in \cite{elsili}. 

In the sequel we will consider a weaker inequality due to Heintze (\cite{hei}) which generalizes the previous one for the case where $\svar$ is isometrically immersed by $\phi$ in a $(n+1)$-dimensional Riemannian ma\-nifold $\amb$ whose sectional curvature $K^N$ is bounded above by $\delta$. Indeed if $\phi(M)$ lies in a convex ball and if the radius of this ball is $\frac{\pi}{4\sqrt{\delta}}$ in the case $\delta>0$, we have
\begin{align}\label{heintze}\lap\leqslant m(\nhinf^2+\delta)
\end{align}
where $\|H\|_{\infty}$ denotes the $L^{\infty}$-norm of the mean curvature. Now for $m=n$ if we assume that $K^N$ is bounded below by $\mu$ and $M$ has a constant mean curvature $H$ and is stable (see section 5) we have

$$n(H^2+\mu)\leqslant\lap\leqslant n(H^2+\delta)$$
Consequently we see that if $N$ is not of constant sectional curvature we can't conclude as in the case of space forms. However, the above inequality is a kind of pinching on Reilly's inequality, that is a condition of almost equality. Such conditions have been studied for Reilly's inequality in Euclidean space in \cite{colgro}. In the present paper we will generalize  the results of \cite{colgro} to the inequality (\ref{heintze}) for hypersurfaces (i.e. $m=n$) of ambient spaces with non constant sectional curvature. That amounts to finding conditions on geometric invariants so that if we have
$$(\Lambda_{\varepsilon})\hspace{1cm} n(\nhinfc+\delta)<\la(1+\varepsilon)$$
then $M$ is close to a sphere in a certain sense.

This problem is a particular case of a pinching concerning the moment of inertia $J_p(M)$ of $M$ with respect to a point $p$. It is defined by
$$J_p(M):=\nldxpc$$
where $X_x:=\sdl(r(x))\gradn r\mid_x$, $r(x)$ is the geodesic distance between $x$ and $p$, $\|\ \cdot\ \|_q$ is the $L^q$-norm on $C^{\infty}(M)$ defined by $\|f\|_q^q=\frac{1}{V(M)}\insm|f|^q\vol$ and $\sdl$ is the function defined by
\[ \sdl(r)=\left\{ \begin{array}{lll}

\frac{1}{\sqrt{\delta}}\sin\sqrt{\delta}r &\; \text{if} \;\; \delta>0\\

r &\; \text{if}\;\; \delta=0 \\

\frac{1}{\sqrt{|\delta|}}\sinh\sqrt{|\delta|}r &\; \text{if} \;\; \delta<0,\ \ \end{array} \right.\] 
The invariant $J_p(M)$ satisfies the inequality
\begin{align}\label{inertiamom}1\leqslant(\nhinf^2+\delta)J_p(M)^2\end{align}
where $\phi(M)$ is contained in the ball of center $p$ and of radius $\frac{\pi}{4\sqrt{\delta}}$ if $\delta>0$. We associate to this inequality the pinching
$$(I_{p,\varepsilon})\hspace{1cm}(\nhinf^2+\delta)J_p(M)^2\leqslant 1+\varepsilon$$
If $(\Lambda_{\varepsilon})$ holds and $\varepsilon$ is small enough ($\varepsilon<1/4$) then for the center of mass $p_0$ of $M$, $(I_{p_0,6\varepsilon})$ is satisfied (see proposition \ref{IimpliqueL}).

In fact, the pinching $(I_{p,\varepsilon})$ is more general than this one concerning the extrinsic radius $R(M)$ defined as the radius of the smallest ball containing $\phi(M)$. We recall that we have the following lower bound of the radius (see \cite{baikou} for instance)
\begin{align}\label{boundradius1}
\frac{\sdl(R(M))}{\cdl(R(M))}\geqslant\frac{1}{\nhinf}
\end{align}
where $\cdl=\sdl'$ or equivalently
\begin{align}\label{boundradius2}
1\leqslant(\nhinfc+\delta)\sdl(R(M))^2
\end{align}
In the case of hypersurfaces of the space form of curvature $\delta$, equality in \eqref{boundradius2} cha\-racterizes geodesic spheres. The associated pinching 
$$(R_{\varepsilon})\hspace{1cm}(\nhinfc+\delta)\sdl(R(M))^2\leqslant (1+\varepsilon).$$
has been treated for hypersurfaces of ambient spaces with constant sectional curvature in \cite{rot}. It is easy to see that if $(R_{\varepsilon})$ holds, then $(I_{p_0,\varepsilon})$ is satisfied for the center $p_0$ of the ball of radius $R(M)$ containing $\phi(M)$.

Before giving the main theorems, we set some notations which will be more convenient. Throughout the paper, we will let $h=(\nhinfc+\delta)^{1/2}$ and use $B$ to denote the second fundamental form. Moreover we will let $B(p,R)$ the geodesic ball in $N$ of center $p$ and radius $R$.

We will need two hypotheses on the volume of $M$ and on the injectivity radius $i(N)$ coming from hypotheses assumed in a result on a Sobolev inequality due to Hoffman and Spruck (\cite{hofspr1} and \cite{hofspr2}). Indeed we will assume that $i(N)\geqslant\frac{\pi}{\sqrt{\delta}}$ if $\delta>0$ and we will consider ${\mathcal H}_V(n,N)$ the space of all Riemannian compact, connected and oriented $n$-dimensional Riemannian ma\-nifolds without boun\-dary isome\-trically immersed by \ \ $\phi$\ \  in\ \ $\amb$\ \ which\ \ satisfy\ \ the\ \ following\ \ hypothesis\ \ on\ \ the\ \ volume : $\Vol\leqslant\frac{c \omega_n}{\delta^{n/2}}$ if $\delta>0$ and $\Vol\leqslant c \omega_n i(N)^n$ if $\delta\leqslant 0$ for some constant $c$. For convenience we take $1/\sqrt{\delta}=+\infty$ if $\delta\leqslant 0$.

Let us state the first main theorem.
\begin{theo}\label{diffeo} Let $\amb$ be a $n+1$-dimensional Riemannian ma\-nifold whose sectional curvature $K^N$ satisfies $\mu\leqslant K^N\leqslant \delta$  and $i(N)\geqslant\frac{\pi}{\sqrt{\delta}}$ if $\delta>0$. Let $M\in {\mathcal H}_V(n,N)$ and $p$ be a point of $N$ such that $\phi(M)\subset B\left(p,\min\left(\frac{\pi}{4\sqrt{\delta}},i(N)\right)\right)$. Let $\varepsilon<1$, $q>n$ and  $A>0$. Let us assume that $\max(\Vol^{1/n}\nhinf,\Vol^{1/n}\|B\|_q)\leqslant A$ for\ \ $\delta\geqslant 0$ \ \ (resp.\ \ $\max(\Vol^{1/n}\nhinf,\frac{\nhinf}{h},\Vol^{1/n}\|B\|_q)\leqslant A$\ \ for\ \ $\delta<0$). Then there exist positive constants $C:=C(n,q,A)$, $\alpha:=\alpha(q,n)$ such that if $(I_{p,\varepsilon})$ holds, $\varepsilon^{\alpha}<1/C$ and $\phi(M)$ is contained in the ball $B\left(p,\sdl^{-1}\left(\sqrt{\frac{\varepsilon}{\delta-\mu}}\right)\right)$ then
$$d_{H}\left(\phi(M), S\left(p,\sdl^{-1}\left(\frac{1}{h}\right) \right)\right)\leqslant\frac{C}{h}\varepsilon^{\alpha}$$
where $d_H$ denotes the Hausdorff distance. Moreover $M$ is diffeomorphic and $\varepsilon^{\alpha}$-quasi-isometric to $S(p,\sdl^{-1}\left(\frac{1}{h}\right) )$. Namely there exists a diffeomorphism from $M$ into $S(p,\sdl^{-1}\left(\frac{1}{h}\right) )$ so that
$$\left||dF_x(u)|^2-1\right|\leqslant C\varepsilon^{\alpha}$$
for any $x\in M$, $u\in T_x M$ and $|u|=1$.
\end{theo}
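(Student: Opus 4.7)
The plan is to upgrade the pinching hypothesis $(I_{p,\varepsilon})$ from an integral statement into a pointwise one, and then construct the diffeomorphism as a radial projection onto the geodesic sphere $S := S\left(p,\sdl^{-1}\left(\frac{1}{h}\right)\right)$.

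\textbf{Step 1: $L^2$ closeness.} Starting from $(I_{p,\varepsilon})$, I would expand $\|X\|_2^2 = \frac{1}{V(M)}\insm \sdl(r)^2\,\vol \leqslant \frac{1+\varepsilon}{h^2}$, and combine this with a lower bound on $\sdl(r)$ coming from integration of $\sdl'(r)\gradn r = \gradn\sdl(r)$ against test functions together with a Heintze-type divergence computation involving $\hxnu$. The curvature gap $\delta-\mu$ enters through the Hessian comparison for $\sdl(r)$ on $N$; the assumption $\phi(M)\subset B(p,\sdl^{-1}(\sqrt{\varepsilon/(\delta-\mu)}))$ is precisely what is needed to absorb the error term into $O(\varepsilon)$. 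The outcome is an $L^2$ estimate
\[
\left\|\sdl(r)-\tfrac{1}{h}\right\|_2 \leqslant \frac{C\varepsilon^{1/2}}{h},\qquad \|\xt\|_2 \leqslant \frac{C\varepsilon^{1/2}}{h},
\]
where $\xt$ is the tangential component of $X=\sdl(r)\gradn r$.

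\textbf{Step 2: Moser iteration to $L^\infty$.} This is where the hypotheses on $\mathcal{H}_V(n,N)$, on $\|B\|_q$ with $q>n$, and on $\Vol^{1/n}\nhinf$ are used. The Hoffman--Spruck Sobolev inequality provides a Sobolev embedding on $M$ with a constant depending only on $n$, $A$, $q$. Setting $f:=\sdl(r)-1/h$, one shows that $f$ (or an appropriate power) satisfies an elliptic inequality $\Delta f^2 \leqslant (\text{bounded coefficients involving } B, H, \mu, \delta) f^2 + \text{lower order}$, coming from a Bochner-type identity for the function $r$. Iterating this on powers $f^{2k}$ and using $\|B\|_q$ to bound the coefficients yields
\[
\left\|\sdl(r)-\tfrac{1}{h}\right\|_\infty \leqslant \frac{C\varepsilon^{\alpha}}{h},
\]
for some $\alpha=\alpha(q,n)>0$. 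This immediately gives $\phi(M)\subset T_{C\varepsilon^{\alpha}/h}(S)$, i.e., one half of the Hausdorff estimate.

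\textbf{Step 3: Opposite Hausdorff inclusion and the diffeomorphism.} For the other inclusion, define the projection $F:M\to S$ by $F(x) := \exp_p\!\left(\sdl^{-1}(1/h)\,\frac{\exp_p^{-1}(\phi(x))}{r(x)}\right)$ (well defined since $\phi(M)$ lies inside the injectivity radius of $p$). Applying the same Moser technique to $\xt$ gives $\|\xt\|_\infty\leqslant C\varepsilon^{\alpha}/h$, so $\gradn r$ is almost normal to $M$. Computing $dF$ in a frame adapted to $\gradn r$ and its orthogonal complement and using Gauss's lemma, one finds
\[
|dF_x(u)|^2 = 1 + O(\varepsilon^{\alpha})
\]
for unit $u\in T_xM$. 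This shows $F$ is a local diffeomorphism; surjectivity is obtained because $F(M)$ is open, closed (by compactness and the quasi-isometric estimate) and $S$ is connected. Injectivity then follows from the fact that two preimages would produce a tangential jump incompatible with $\|\xt\|_\infty\leqslant C\varepsilon^{\alpha}/h$. Surjectivity also yields the opposite Hausdorff inclusion $S\subset T_{C\varepsilon^{\alpha}/h}(\phi(M))$.

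\textbf{Main obstacle.} The core difficulty is Step 2: running Moser iteration for $f=\sdl(r)-1/h$ in non-constant ambient curvature. In the Euclidean model of \cite{colgro} the Bochner identity for $|X|^2$ simplifies sharply, but here extra curvature terms of the form $(\delta-K^N)\sdl(r)^2$ must be controlled by the ball-radius hypothesis, and the $\nhinf/h$ bound (for $\delta<0$) is needed to prevent the iteration constants from blowing up. Carrying this through uniformly in $n$, $q$, $A$ while keeping the final exponent $\alpha$ depending only on $(q,n)$ is the key technical point.
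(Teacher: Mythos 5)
Your three-step plan (integral pinching $\Rightarrow$ $L^2$ estimates $\Rightarrow$ Moser iteration $\Rightarrow$ radial projection) matches the paper's overall structure exactly; the map $F(x)=\exp_p\bigl(\sdl^{-1}(1/h)\,\tfrac{\exp_p^{-1}(x)}{r(x)}\bigr)$ is precisely the paper's map, and the use of Hoffman--Spruck, $\|B\|_q$, and the curvature gap $\delta-\mu$ are all the right ingredients. However, three technical points deviate from what the paper actually does, and they matter.

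First, the Moser iteration in the paper is \emph{not} based on an elliptic inequality $\Delta f^2\leqslant\dots$ coming from a Bochner formula, as you propose. It is a purely first-order ``Nirenberg--Moser'' iteration: one bounds $|d(\psi^{2\alpha})|$ \emph{pointwise} in terms of $\psi^{2\alpha-2}$ and feeds this into the Hoffman--Spruck Sobolev inequality $\|f\|_{n/(n-1)}\leqslant K(n)\Vol^{1/n}(\|df\|_1+\|Hf\|_1)$. No Laplacian is taken. Your Bochner route would force extra second-fundamental-form and curvature terms into the iteration, making the bound for $\bigl||X|-\tfrac1h\bigr|$ depend on $\|B\|_q$. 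In the paper $\|B\|_q$ enters \emph{only} for the $\|\xt\|_\infty$ estimate (Lemma \ref{norminfxt}), which is needed for $dF$; the Hausdorff-proximity part is genuinely $\|B\|_q$-free, and a later remark relies on that fact. Moreover, the paper iterates on the weighted function $\psi=|X|^{1/2}\bigl||X|-\tfrac1h\bigr|$ starting from an $L^1$ bound (Lemma \ref{normldf}); your claimed $L^2$ bound on $\sdl(r)-\tfrac1h$ alone is not a direct consequence of $(I_{p,\varepsilon})$ --- the decompositions $Y=nH\cdl\nu-n\nhinfc X$ and $W=|X|^{1/2}(\delta X+H\cdl\nu-h\tfrac{X}{|X|})$ are what make the variance controllable, and the $|X|^{1/2}$ weight is not optional.

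Second, the ball-radius hypothesis $\phi(M)\subset B(p,\sdl^{-1}(\sqrt{\varepsilon/(\delta-\mu)}))$ is \emph{not} used in the $L^2$ step. The $L^2$ estimates (Lemmas \ref{normztangent}--\ref{estiy}) use only $K^N\leqslant\delta$ via Heintze's divergence bound $\dvgce(\xt)\geqslant n\cdl-n\hxnu$; the lower curvature bound $\mu$ never appears there. The gap $\delta-\mu$ enters only when comparing $|dF_x(u)|$ between the $\sdl$- and $s_\mu$-models via the Jacobi-field estimates in Lemma \ref{estidf1}, and the small-ball hypothesis is there precisely to guarantee $(\delta-\mu)/(\varepsilon h^2)\leqslant 1$ (via \eqref{boundradius2}) so the comparison factor $\bigl(1+D(\delta-\mu)/h^2\bigr)^2$ is $1+O(\varepsilon^\alpha)$. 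Placing this hypothesis in Step 1, as you do, would leave the actual difficulty (the Jacobi comparison) unaddressed.

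Third, your injectivity argument (``two preimages would produce a tangential jump'') is not convincing as stated: two preimages of the same point on $S$ lie on the same radial geodesic at nearly the same distance from $p$, and smallness of $\|\xt\|_\infty$ does not by itself rule this out. The clean argument (implicit in the paper) is that $F$ is a local diffeomorphism from a compact connected manifold onto the simply connected sphere $S$ (for $n\geqslant 2$), hence a covering map of degree $1$. This is a minor point, but worth being precise about.

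None of these undercuts the viability of the strategy, which is the right one; they are inaccuracies in where each hypothesis is consumed and in the flavor of iteration used, and following your version literally would produce a weaker, more hypothesis-heavy result than the theorem states.
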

We recall that the Hausdorff distance between two compact subsets $A$ and $B$ of a metric space is given by
$$d_{H}(A,B)=\inf\{A\subset V_{\eta}(B) \ \text{and}\  B\subset V_{\eta}(A)\}$$
where  for any subset $A$, $V_{\eta}(A)$ is the tubular neighborhood of $A$ defined by $V_{\eta}(A)=\{x | d(x,A)<\eta\}$.

As in the euclidean case (see \cite{colgro}) for the pinching of $\la$ or as in the hyperbolic case or spherical case (\cite{rot}) for the pinching of extrinsic radius we can obtain the Hausdorff proximity strictly with a dependence on $\nhinf$.

Obviously we have the following corollary
\begin{cor}\label{lambda} Let \ \ $\amb$\ \  be a\ \  $n+1$-dimensional Riemannian ma\-nifold whose sectional curvature \ \ $K^N$\ \  satisfies\ \  $\mu\leqslant K^N\leqslant \delta$\ \   and\ \  $i(N)\geqslant\frac{\pi}{\sqrt{\delta}}$\ \  if\ \  $\delta>0$. Let\ \  $M\in {\mathcal H}_V(n,N)$. Let us assume that\ \  $\phi(M)$\ \  lies in a convex ball of radius\ \  $\min\left(\frac{\pi}{8\sqrt{\delta}},\frac{i(N)}{2}\right)$. Let\ \  $p_0$\ \  be the center of mass of\ \  $M$. Let\ \  $\varepsilon<1/6$\ ,\ \  $q>n$\ \  and\ \  $A>0$.  Let us assume that $\max(\Vol^{1/n}\nhinf,\Vol^{1/n}\|B\|_q)\leqslant A$ for\ \ $\delta\geqslant 0$ \ \ (resp.\ \ $\max(\Vol^{1/n}\nhinf,\frac{\nhinf}{h},\Vol^{1/n}\|B\|_q)\leqslant A$\ \ for\ \ $\delta<0$). Then there exist positive constants $C:=C(n,q,A)$, $\alpha:=\alpha(q,n)$ such that if $(\Lambda_{\varepsilon})$ holds, $\varepsilon^{\alpha}<1/C$ and $\phi(M)$ is contained in the ball $B\left(p_0,\sdl^{-1}\left(\sqrt{\frac{\varepsilon}{\delta-\mu}}\right)\right)$ then
$$d_{H}\left(\phi(M), S\left(p_0,\sdl^{-1}\left(\frac{1}{h}\right) \right)\right)\leqslant\frac{C}{h}\varepsilon^{\alpha}$$
and $M$ is diffeomorphic and $\varepsilon^{\alpha}$-quasi-isometric to $S(p_0,\sdl^{-1}\left(\frac{1}{h}\right) )$.
\end{cor}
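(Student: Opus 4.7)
The plan is to deduce Corollary \ref{lambda} directly from Theorem \ref{diffeo} applied at $p=p_0$, with Proposition \ref{IimpliqueL} serving as the bridge between the two pinching conditions. Since $\varepsilon<1/6<1/4$, that proposition tells us $(\Lambda_\varepsilon)$ implies $(I_{p_0,6\varepsilon})$. It then suffices to check that the remaining hypotheses of Theorem \ref{diffeo} hold with $p_0$ in the role of $p$ and $6\varepsilon$ in the role of $\varepsilon$.

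The key verification is the inclusion $\phi(M)\subset B\bigl(p_0,\min(\pi/(4\sqrt{\delta}),i(N))\bigr)$ demanded by Theorem \ref{diffeo}. Since $\phi(M)$ lies in a convex ball of radius $\min(\pi/(8\sqrt{\delta}),i(N)/2)$ by hypothesis, the Riemannian center of mass $p_0$, which is well-defined and unique in such a convex ball (in the sense of Karcher), lies in the same ball; consequently $\phi(M)$ is contained in the geodesic ball of twice that radius around $p_0$, which is exactly the required inclusion. The extra containment $\phi(M)\subset B\bigl(p_0,\sdl^{-1}(\sqrt{\varepsilon/(\delta-\mu)})\bigr)$ assumed in the corollary implies the analogous containment at level $6\varepsilon$, since $\sdl^{-1}$ is monotone. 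The bounds $\max(V(M)^{1/n}\nhinf,V(M)^{1/n}\|B\|_q)\leqslant A$, together with $\nhinf/h\leqslant A$ in the case $\delta<0$, transfer verbatim.

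Applying Theorem \ref{diffeo} then yields constants $C'=C'(n,q,A)$ and $\alpha=\alpha(q,n)$ such that, whenever $(6\varepsilon)^\alpha<1/C'$, one has
$$d_H\bigl(\phi(M),S(p_0,\sdl^{-1}(1/h))\bigr)\leqslant\frac{C'}{h}(6\varepsilon)^\alpha,$$
together with a diffeomorphism $F\colon M\to S(p_0,\sdl^{-1}(1/h))$ satisfying $\left||dF_x(u)|^2-1\right|\leqslant C'(6\varepsilon)^\alpha$. Absorbing the factor $6^\alpha$ into $C'$ gives the constants $C$ and $\alpha$ of the corollary, and the smallness hypothesis $\varepsilon^\alpha<1/C$ is secured by the same readjustment.

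The main work is not in this deduction but in the two inputs it rests on, namely Theorem \ref{diffeo} and Proposition \ref{IimpliqueL}; at the corollary level the only genuinely delicate point is the placement of the center of mass, which is precisely why the convex ball hypothesis is imposed with \emph{halved} radii.
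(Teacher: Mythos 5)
Your proof is correct and follows essentially the same route as the paper: pass from $(\Lambda_\varepsilon)$ to $(I_{p_0,6\varepsilon})$ via Proposition \ref{IimpliqueL}, use the convex ball of halved radius to place $p_0$ and obtain $\phi(M)\subset B(p_0,\min(\pi/(4\sqrt{\delta}),i(N)))$, and then invoke Theorem \ref{diffeo} with $6\varepsilon$, absorbing $6^\alpha$ into the constant. One small imprecision: the ``center of mass'' in this paper is not the Karcher barycenter but the zero in $B$ of the Heintze--Chavel vector field $Y_q=\int_M\frac{\sdl(d(q,x))}{d(q,x)}\exp_q^{-1}(x)\,dv(x)$ introduced in Section 2; both notions place $p_0$ inside the convex ball, so the argument is unaffected.
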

Theorem \ref{diffeo} allows to obtain an application for the stable constant mean curvature hypersurfaces. Indeed we have the following stability theorem
\begin{theo}\label{stab}  Let $\amb$ be a $n+1$-dimensional Riemannian ma\-nifold whose sectional curvature $K^N$ satisfies $\mu\leqslant K^N\leqslant \delta$ and $i(N)\geqslant\frac{\pi}{\sqrt{\delta}}$ if $\delta>0$ and let $M\in {\mathcal H}_V(n, N)$. Let us assume that\ \  $\phi(M)$\ \  lies in a convex ball of radius\ \ $\min\left(\frac{\pi}{8\sqrt{\delta}},\frac{i(N)}{2}\right)$. Let\ \  $p_0$\ \  be the center of mass of\ \  $M$. Let\ \ $\varepsilon<1/6$,\ \ $q>n$ and \ \ $A>0$. Then there exist positive constants $C:=C(n,q,A)$, $\alpha:=\alpha(q,n)$ and $R(\delta,\mu,\varepsilon)$ such that if $\phi$ is of constant mean curvature $H$ and stable, $\Vol^{1/n}\|B\|_q\leqslant A$\ \ for\ \ $\delta\geqslant 0$ \ \ (resp.\ \ $\max(\frac{H}{h},\Vol^{1/n}\|B\|_q)\leqslant A$\ \ for\ \ $\delta<0$), $\varepsilon^{\alpha}<1/C$ and $\phi(M)$ is contained in a convex ball of radius  $\frac{1}{2}\sdl^{-1}\left(\sqrt{\frac{\varepsilon}{2(\delta-\mu)}}\right)$ then
$$d_{H}\left(\phi(M), S\left(p_0,\sdl^{-1}\left(\frac{1}{h}\right) \right)\right)\leqslant\frac{C}{h}\varepsilon^{\alpha}$$
and $M$ is diffeomorphic and $\varepsilon^{\alpha}$-quasi-isometric to $S(p_0,\sdl^{-1}\left(\frac{1}{h}\right) )$.\end{theo}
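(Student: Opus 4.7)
The plan is to reduce Theorem \ref{stab} to Corollary \ref{lambda}: their conclusions are identical, so it is enough to check that the hypotheses of Corollary \ref{lambda}, in particular the pinching $(\Lambda_\varepsilon)$, are forced by stability together with the smallness of the enclosing ball.

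The first step is the standard lower bound on $\lap$ coming from stability: if $M$ is a closed orientable constant mean curvature hypersurface of $N$ which is stable for the volume-preserving variational problem, then $\lap \geq n(H^2 + \mu)$, as announced in the introduction (to be proved in Section~5). The argument is the usual test-function one: the Jacobi form $Q(f,f) = \insm \bigl( |\gradm f|^2 - (|B|^2 + \Ric^N(\nu,\nu)) f^2 \bigr) \vol$ is nonnegative on functions of vanishing mean; applied to a first Laplacian eigenfunction (automatically of zero mean on a closed manifold) and combined with the pointwise inequalities $|B|^2 \geq nH^2$ (Cauchy--Schwarz) and $\Ric^N(\nu,\nu) \geq n\mu$ (from $K^N \geq \mu$), it yields the claim.

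Together with Heintze's inequality \eqref{heintze}, this sandwiches $n(H^2+\mu) \leq \lap \leq n(H^2+\delta)$, so the defect $n(H^2+\delta) - \lap$ is at most $n(\delta-\mu)$. Hence $(\Lambda_\varepsilon)$, namely $n(H^2+\delta) \leq \lap(1+\varepsilon)$, holds as soon as $\lap \geq n(\delta-\mu)/\varepsilon$, which by the stability bound is implied by $H^2 + \mu \geq (\delta-\mu)/\varepsilon$. This is where the ball hypothesis enters. Since $\phi(M)$ is contained in a ball of radius $R_0 = \frac{1}{2}\sdl^{-1}\bigl(\sqrt{\varepsilon/(2(\delta-\mu))}\bigr)$, the extrinsic radius satisfies $R(M) \leq R_0$, and by monotonicity of $\sdl$, $\sdl(R(M)) \leq \sdl(2R_0) = \sqrt{\varepsilon/(2(\delta-\mu))}$; inserting in \eqref{boundradius2} gives $\hc \geq 2(\delta-\mu)/\varepsilon$, hence $H^2 + \mu = \hc - (\delta-\mu) \geq (\delta-\mu)(2/\varepsilon - 1) \geq (\delta-\mu)/\varepsilon$ for $\varepsilon < 1$, as required.

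The remaining hypotheses of Corollary \ref{lambda} are routine: the bound $\Vol^{1/n}\nhinf \leq \Vol^{1/n}\|B\|_q/\sqrt{n} \leq A$ follows from $|H| \leq |B|/\sqrt{n}$ and the fact that $H$ is constant; the center of mass $p_0$ lies in the convex ball of radius $R_0$, so $\phi(M) \subset B(p_0, 2R_0) \subset B\bigl(p_0, \sdl^{-1}(\sqrt{\varepsilon/(\delta-\mu)})\bigr)$. Corollary \ref{lambda} then applies and delivers the Hausdorff proximity, the diffeomorphism, and the $\varepsilon^\alpha$-quasi-isometry to $S(p_0, \sdl^{-1}(1/h))$. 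The genuinely non-trivial ingredient is the stability lower bound $\lap \geq n(H^2+\mu)$ of Section~5; the rest is careful bookkeeping, designed so that the prescribed ball-radius forces the pinching $(\Lambda_\varepsilon)$.
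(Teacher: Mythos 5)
Your proposal is correct and follows essentially the same route as the paper's proof: you derive the stability lower bound $\lap\geqslant n(H^2+\mu)$ from the second variation, sandwich it with Heintze's inequality, observe that the prescribed ball radius forces $h^2\geqslant 2(\delta-\mu)/\varepsilon$ via \eqref{boundradius2} and hence $(\Lambda_\varepsilon)$, and then invoke Corollary \ref{lambda} after checking that the center of mass $p_0$ lies in the ball of radius $R_3$ so that $\phi(M)\subset B\bigl(p_0,\sdl^{-1}(\sqrt{\varepsilon/(\delta-\mu)})\bigr)$. The only cosmetic difference is that you make the algebraic derivation of the pinching from the ball hypothesis fully explicit, whereas the paper compresses it into the single inequality $nh^2\leqslant\lap\bigl(1+\frac{1}{h^2/(\delta-\mu)-1}\bigr)$; the substance is identical.
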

\begin{rema} Note that from (\cite{morjoh}) we know that stable constant mean curvature embedded hypersurfaces bounding a small volume are nearly round spheres. In our corollary we consider the more general case of immersed hypersurfaces. Moreover we give a proximity with a geodesic sphere of the ambient space with explicite center and radius.

On the other hand let us recall a result concerning the topology of isoperimetric hypersurfaces (the embedded case). For instance if $n=2$ and the Ricci curvature $\Ric^N$ of $N$ is bounded below by $2$, Ros proved that if the volume of $M$ is large enough then $M$ is homeomorphic either to a sphere or a torus (\cite{ros1}).
\end{rema}
As another application of theorems \ref{diffeo} we have results for the almost umbilic hypersurfaces of space forms. These theorems are to be compared with results of Shiohama and Xu (\cite{shixu1} and \cite{shixu2}) who obtain conditions on the Betti numbers.
\begin{theo}\label{theo14}  Let $\amb$ be a $(n+1)$-dimensional Riemannian ma\-nifold with constant sectional curvature $\delta\neq 0$ and let $M\in {\mathcal H}_V(n,N)$. Let us assume that $\phi(M)$ lies in a convex ball of radius $\frac{\pi}{8\sqrt{\delta}}$. Let $p$ be the center of mass of $M$. Let $\varepsilon<1$, $r,q>n$ and $A>0$. Moreover let us assume that $\max(\Vol^{1/n}\nhinf,\Vol^{1/n}\|B\|_q)\leqslant A$ for\ \ $\delta\geqslant 0$ \ \ (resp.\ \ $\max(\Vol^{1/n}\nhinf,\frac{\nhinf}{h},\Vol^{1/n}\|B\|_q)\leqslant A$\ \ for\ \ $\delta<0$). Then there exist positive constants \ \ $C:=C(n,q,A)$\ ,\ \  $\alpha:=\alpha(q,n)$\ \  such that if \ \ $\varepsilon^{\alpha}\leqslant 1/C$ and 
\begin{enumerate}
\item $\|\tau\|_r\leqslant\|H\|_r\varepsilon$.
\item $\left\|H^2-\nhinf^2\right\|_{r/2}\leqslant\|H\|_r^2\varepsilon$.
\end{enumerate}
Then \ \ $M$\ \ is\ \ $\varepsilon$-Hausdorff close, diffeomorphic and\ \ $\varepsilon$-quasi-isometric to\ \ $S(p,\sdl^{-1}\left(\frac{1}{h}\right) )$. 
\end{theo}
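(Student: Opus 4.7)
The plan is to deduce Theorem \ref{theo14} from Theorem \ref{diffeo} applied at the center of mass $p$. Observe that here $\mu=\delta$, so the ball $B\bigl(p,\sdl^{-1}(\sqrt{\varepsilon/(\delta-\mu)})\bigr)$ has infinite radius and the containment hypothesis of Theorem \ref{diffeo} is automatic. Moreover, the quasi-isometry/Hausdorff/diffeomorphism conclusion we want is already the conclusion of Theorem \ref{diffeo}. Thus the entire content of the proof is to check that hypotheses (1) and (2) imply a pinching of the form $(I_{p,C\varepsilon^{\beta}})$ for some constant $\beta>0$ and a constant $C$ depending only on $n$, $q$, $r$, $A$.

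First I would exploit the algebraic identity $|B|^2=nH^2+|\tau|^2$ valid for any hypersurface (with $\tau$ the trace-free part of $B$). Hypothesis (1) gives $\||B|^2-nH^2\|_{r/2}=\|\tau\|_r^2\leqslant\|H\|_r^2\varepsilon^2$, and combining with hypothesis (2) and the triangle inequality yields
\begin{equation*}
\bigl\||B|^2-n\nhinfc\bigr\|_{r/2}\leqslant C\|H\|_r^2\,\varepsilon.
\end{equation*}
Since $\Vol^{1/n}\nhinf\leqslant A$ (hence $\|H\|_r\leqslant C(A)$ after normalization by the volume), $|B|^2$ is concentrated in $L^{r/2}$ around $n\nhinfc$, the pointwise value it takes on a geodesic sphere of mean curvature $\nhinf$.

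Next I would retrace the Heintze proof of \eqref{inertiamom}. That inequality is obtained by integrating a Bochner-type identity for $\sdl(r)$ against the position-like field $X$ and by invoking the pointwise estimate $nH^2\leqslant|B|^2$. Keeping the remainder of this last step explicit, one finds
\begin{equation*}
1\leqslant h^2J_p(M)^2\leqslant 1+\frac{1}{\Vol}\insm\bigl(|B|^2-n\nhinfc\bigr)|X|^2\,\vol+\text{(lower order terms)},
\end{equation*}
where the lower order terms are controlled by $\|H^2-\nhinfc\|_{r/2}$ and the convex-ball containment. The hypothesis $\phi(M)\subset B(p,\pi/(8\sqrt\delta))$ gives a pointwise bound on $|X|$, and Hölder's inequality combined with the $L^{r/2}$-estimate above delivers
\begin{equation*}
h^2J_p(M)^2\leqslant 1+C\varepsilon^{\beta},
\end{equation*}
i.e. the pinching $(I_{p,C\varepsilon^{\beta}})$, after which Theorem \ref{diffeo} concludes the proof.

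The main obstacle will be the second step: isolating, in Heintze's argument, a remainder whose size is genuinely governed by the integrated defect of umbilicity $\||B|^2-nH^2\|_{r/2}$ and the defect of constancy $\|H^2-\nhinfc\|_{r/2}$, rather than merely by $\|B\|_\infty$. A Sobolev/Hoffman--Spruck-type estimate, using the bound $\Vol^{1/n}\|B\|_q\leqslant A$ for $q>n$, should be needed to upgrade the $L^{r/2}$ control to an integrated bound compatible with the integration against $|X|^2$, and it is precisely this interpolation that fixes the exponent $\alpha=\alpha(q,n)$ (via $\beta$) appearing in the final conclusion.
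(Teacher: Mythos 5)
Your proposal has a genuine gap: the route through a putative remainder in Heintze's proof of \eqref{inertiamom} does not exist, and the actual argument requires a different tool that your sketch omits entirely, namely Aubry's eigenvalue pinching theorem in almost-positive Ricci curvature (Theorem \ref{Aubry}).

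Heintze's derivation of the moment-of-inertia inequality, as recalled in the Preliminaries, only involves the mean curvature through the divergence estimate $\dvgce(X^T)\geqslant n\cdl - nH\langle X,\nu\rangle$; at no point does $|B|^2$ or the pointwise bound $nH^2\leqslant|B|^2$ enter, so there is no term of the form $\frac{1}{\Vol}\insm(|B|^2-n\nhinfc)|X|^2\,\vol$ to extract. More fundamentally, hypotheses (1) and (2) do not directly control the extent $\|X\|_2$ of $M$, so trying to deduce $(I_{p,\varepsilon'})$ from them by this route is not going to work: the pinching you need to enter Theorem \ref{diffeo} is obtained in the paper by establishing $(\Lambda_\varepsilon)$, i.e.\ a matching \emph{lower} bound on $\la$, and then invoking Proposition \ref{IimpliqueL}/Corollary \ref{lambda}.

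The mechanism for that lower bound is the one you're missing. Since $N$ has constant curvature $\delta$, the Gauss formula gives $\Ric = \overline{R}^{\phi} + nHB - B^2 - (n-1)\delta g$ (restricted appropriately), and hence $\Ric-(n-1)(H^2+\delta)g = (n-2)H\tau - \tau^2$. Hypothesis (1) controls $\|(n-2)H\tau-\tau^2\|_{r/2}$ and hypothesis (2) controls $\|H^2-\nhinfc\|_{r/2}$, so together they bound $\|\Ric-(n-1)(\nhinfc+\delta)g\|_{r/2}$ by $K(n)\|H\|_r^2\varepsilon$. Feeding this into Theorem \ref{Aubry} with $k=\nhinfc+\delta$ produces $\la\geqslant n(\nhinfc+\delta)(1-C(n,r)\varepsilon)$ (with the extra factor $\nhinf^2/h^2$ controlled by $A$ when $\delta<0$), which combined with Heintze's upper bound yields $(\Lambda_{\varepsilon'})$. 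From there Corollary \ref{lambda} applies, and since $\mu=\delta$ the extra containment in a ball of radius $\sdl^{-1}(\sqrt{\varepsilon/(\delta-\mu)})$ is vacuous as you correctly note. So the correct observation in your sketch — that the $L^{r/2}$ defect of umbilicity and constancy should drive the estimate — is realized not through Heintze's inequality but through the intrinsic Ricci curvature of $M$ and the appeal to Aubry's theorem.
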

\begin{rema} The dependence on $\|B\|_q$ is not necessary for the Hausdorff proximity. 
\end{rema}
In the Euclidean case, using the pinching theorem proved in \cite{colgro} we can improve the condition 2)
\begin{theo}\label{15} Let $\var$ be a compact, connected and oriented $n$-dimensional Riemannian ma\-nifold without boundary
isometrically immersed by $\phi$ in $\R^{n+1}$. Let $p$ be the center of mass of $M$. Let $\varepsilon<1$, $r,q>n$, $s\geqslant r$ and $A>0$. Let us assume that $\Vol^{1/n}\|H\|_q\leqslant A$. Then there exist positive constants $C:=C(n,q,A)$, $\alpha:=\alpha(q,n)$ such that if $\varepsilon^{\alpha}\leqslant 1/C$ and 
\begin{enumerate}
\item $\|\tau\|_r\leqslant\|H\|_r\varepsilon$.
\item $\left\|H^2-\|H\|_s^2\right\|_{r/2}\leqslant\|H\|_r^2\varepsilon$.
\end{enumerate} 
Then $M$ is $\varepsilon$-Hausdorff close to $S\left(p,\frac{1}{\|H\|_2} \right)$. Moreover if $\Vol^{1/n}\|B\|_q\leqslant A$ then $M$ is diffeomorphic and $\varepsilon$-quasi-isometric to $S\left(p,\frac{1}{\|H\|_2} \right)$.
\end{theo}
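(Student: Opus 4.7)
\emph{Proof plan.} The strategy is to verify the Euclidean moment-of-inertia pinching $\|H\|_2^2\|X\|_2^2 \leq 1 + C\varepsilon^\beta$ with $X = \phi - p_0$, and then to invoke the Euclidean pinching theorem of \cite{colgro}: under the a priori bound $\Vol^{1/n}\|H\|_q \leq A$ this theorem yields the Hausdorff proximity to $S(p_0, 1/\|H\|_2)$, and under the additional bound $\Vol^{1/n}\|B\|_q \leq A$ it also provides the diffeomorphism and the $\varepsilon$-quasi-isometry.

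The key observation is that the vector field $W := HX - \nu$ vanishes identically on a round sphere of radius $1/H$ centered at $p_0$, so $\|W\|_2^2$ measures the defect from such a sphere. Using the Hsiung-Minkowski identity $\frac{1}{V(M)}\insm H\langle X,\nu\rangle\,\vol = 1$ one finds
\begin{equation*}
\|W\|_2^2 \;=\; \frac{1}{V(M)}\insm H^2|X|^2\,\vol \;-\; 1 .
\end{equation*}
Splitting $H^2 = \|H\|_s^2 + (H^2 - \|H\|_s^2)$ and applying H\"older's inequality with conjugate exponents $r/2$ and $r/(r-2)$, together with condition~(2),
\begin{equation*}
\frac{1}{V(M)}\insm H^2|X|^2\,\vol \;\leq\; \|H\|_s^2\,\|X\|_2^2 \;+\; \|H\|_r^2\,\varepsilon\;\bigl\|\,|X|^2\bigr\|_{r/(r-2)} .
\end{equation*}
Integrating~(2) also yields $\bigl|\|H\|_2^2 - \|H\|_s^2\bigr| \leq \|H\|_r^2\varepsilon$, while Hsiung-Minkowski gives $\|H\|_2\|X\|_2 \geq 1$; combining these, the first term on the right is at most $\|H\|_2^2\|X\|_2^2 + C\varepsilon$. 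The task therefore reduces to controlling $\bigl\||X|^2\bigr\|_{r/(r-2)}$, i.e.\ to bounding $|X|$ in a sufficiently high Lebesgue norm.

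This last step is the main obstacle. One starts from the test-function inequality $\lap\,\|X\|_2^2 \leq n$, valid because $p_0$ is the center of mass and hence $\insm X_i\,\vol = 0$, which provides an $L^2$ control on $|X|$. Upgrading to $L^{r/(r-2)}$ requires a Moser iteration based on the Michael-Simon Sobolev inequality, whose mean-curvature term is controlled by $\Vol^{1/n}\|H\|_q \leq A$: the condition $q>n$ is precisely what allows the iteration to close. Condition~(1) enters through the Euclidean Codazzi identity $(n-1)\,dH = \dvgce\tau$, which links variations of $H$ to $\tau$ and enables the integrability bootstrap from $\|\tau\|_r$. This Moser-iteration scheme is the central advance in \cite{colgro} over the ambient-space result invoked for Theorem~\ref{theo14}, replacing the $L^\infty$-control of $H$ used there by the weaker $L^q$-control used here. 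Once $\|H\|_2^2\|X\|_2^2 \leq 1 + C\varepsilon^\beta$ is established, the Hausdorff proximity and, under the additional $\Vol^{1/n}\|B\|_q \leq A$ bound, the diffeomorphism and quasi-isometry follow directly from the Euclidean pinching theorem.
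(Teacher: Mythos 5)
Your proposal takes a genuinely different route from the paper and contains a gap that I do not think can be repaired within the framework you set up. The paper's proof of Theorems \ref{theo14} and \ref{15} proceeds as follows: the Gauss equation converts hypotheses (1) and (2) into an $L^{r/2}$-bound on the Ricci defect $\Ric-(n-1)\|H\|_s^2 g$; Aubry's eigenvalue theorem (Theorem \ref{Aubry}) then yields the crucial \emph{lower} bound $\la\geqslant n\|H\|_s^2(1-C\varepsilon)$; combined with Reilly's inequality $\la\leqslant n\|H\|_2^2\leqslant n\|H\|_s^2$ this produces the eigenvalue pinching, and the Euclidean pinching results of \cite{colgro} and \cite{aubgrorot} finish the proof. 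Aubry's theorem is the essential bridge between the almost-umbilic hypotheses and the eigenvalue pinching, and it is entirely absent from your plan.

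Your plan tries instead to establish the moment-of-inertia pinching $\|H\|_2^2\|X\|_2^2\leqslant 1+C\varepsilon^\beta$ directly, starting from $\|W\|_2^2=\frac{1}{\Vol}\insm H^2|X|^2\vol-1$ with $W=HX-\nu$. But your inequalities run in the wrong direction: from $\|W\|_2^2\geqslant 0$ and the H\"older estimate you obtain only $\|H\|_s^2\|X\|_2^2\geqslant 1-\|H\|_r^2\varepsilon\,\bigl\||X|^2\bigr\|_{r/(r-2)}$, i.e.\ a \emph{lower} bound on $\|H\|_2^2\|X\|_2^2$, which is already known from Hsiung--Minkowski and Cauchy--Schwarz; nothing in the computation produces the required \emph{upper} bound. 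Likewise, the test-function inequality $\la\|X\|_2^2\leqslant n$ that you invoke as the starting $L^2$-control gives $\|X\|_2^2\leqslant n/\la$, which is useful only once one has a lower bound on $\la$ --- again exactly what Aubry's theorem supplies and what your plan omits, so the loop does not close. Finally, the assertion that condition (1) enters through the Codazzi identity and an ``integrability bootstrap'' is not how the paper uses it: in the paper, (1) controls the $\tau$-terms arising in the Gauss formula for the intrinsic Ricci curvature, i.e.\ it bounds the Ricci defect which is the input to Aubry's theorem.
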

\section{Preliminaries}
Let $\var$ be a compact, connected $n$-dimensional Riemannian manifold
isometrically immersed by $\phi$ in an $(n+1)$-dimensional Riemannian manifold $\amb$ whose sectional curvature is bounded by $\delta$. For any point $p\in N$ let us consider $\exp$ be the exponential map at this point. Locally we consider
$(x_i)_{1\leqslant i\leqslant n}$ the normal coordinates of $N$ centered at $p$ and
for all $x\in N$, we denote by $r(x)=d(p,x)$, the geodesic distance between $p$ and $x$ on $\amb$. 

We recall that the function $\cdl$ is defined by $\cdl=\sdl'$. Obviously, we have $\cdl^2+\delta\sdl^2=1$ and $\cdl'=-\delta\sdl$. 

The gradient of a function $u$ defined on $N$ with respect to $h$ will be denoted by $\gradn u$ and the gradient with respect to $g$ of the restriction of $u$ on $M$ will be denoted by $\gradm u$.

Now considering the vector field on $M$, $X=\sdl\gradn r$ we recall that Heintze proved that
\begin{align}\label{div}\dvgce(\xt)\geqslant n\cdl-n\hxnu\end{align}
Then using this identity
\begin{align*}\insm(n-\dxtd)\vol&=\insm(n-\dvgce(\xt)\cdl)\vol\\
&\leqslant \insm(n-n\cdl^2+n\hxnu\cdl)\vol\displaybreak[2]\\
&=\insm(n\delta\sdl^2+n\hxnu\cdl)\vol\displaybreak[2]\\
&\leqslant\insm n\delta\sdl^2\vol+\nhinf\insm n\sdl\cdl\vol\displaybreak[2]
\end{align*}
and using again (\ref{div}) we get
\begin{align*}\insm(n-\dxtd)\vol&\leqslant n\delta\insm\xd\vol+\nhinf\insm(n \hxnu\sdl+\dvgce(\xt)\sdl)\vol\\
&=n\delta\intxd+\nhinf\insm(n \hxnu\sdl-\cdl\sdl\gradmrc)\vol\\
&\hspace{-2cm}\leqslant n\delta\intxd+n\nhinfc\insm|\langle X,\nu\rangle||X|\vol-\nhinf\insm\cdl\sdl\gradmrc\vol\\
&\hspace{-2cm}\leqslant n\delta\intxd+n\nhinfc\insm\xd\vol-\nhinf\insm\cdl\sdl\gradmrc\vol\\
&\hspace{-2cm}\leqslant n(\nhinfc+\delta)\intxd-\nhinf\insm\cdl\sdl\gradmrc\vol\end{align*}
Finally
$$1\leqslant(\nhinfc+\delta)\nldx+\frac{1}{nV(M)}\insm(\delta\sdl^2-\cdl\sdl\nhinf)\gradmrc\vol$$
Now if $\delta\leqslant 0$ the last term is nonpositif. If $\delta>0$, since we have assumed that $\phi(M)$ is in the ball $B\left(p,\frac{\pi}{4\sqrt{\delta}}\right)$, it follows that $\frac{\sdl\left(\frac{\pi}{4\sqrt{\delta}} \right) }{\cdl\left(\frac{\pi}{4\sqrt{\delta}} \right)}\geqslant\frac{1}{\nhinf}$ and then
\begin{align}\label{delta/hc}\frac{\delta}{\nhinf^2}\leqslant 1.\end{align} It follows that $\delta\sdl^2-\cdl\sdl\nhinf\leqslant 0$. This completes the proof of (\ref{inertiamom}).

Now let us recall briefly the proof of Heintze. We will use $\frac{\sdl(r)}{r}x_{i}$ as test functions in the variational cha\-racterization of $\lap$. But these functions must be $L^2$-orthogonal to the constant functions. For this purpose, we use a standard argument used by Chavel and Heintze (\cite{cha} and \cite{hei}). Indeed, if $\phi(M)$ lies in a convex ball $B$ the vector field $Y$ defined in a neighborhood of $B$ by
$$Y_{q}=\int_{M}\frac{\sdl(d(q,x))}{d(q,x)}exp_{q}^{-1}(x)\vol(x)\in
T_{q}N,\ \  q\in M\ \ ,$$
has necessarily a zero in $B$ at a point $p$ called the center of mass of $M$. Consequently, for a such $p$,
$\insm\frac{\sdl(r)}{r}x_{i}\vol=0$. For $\delta>0$, we assume in addition that $\phi(M)$ is contained in a ball of radius $\frac{\pi}{4\sqrt{\delta}}$. Indeed, in this case $\phi(M)$ lies in a ball of center $p$ (the point $p$ so that $Y_p=0$) with a radius less or equal to $\frac{\pi}{2  \sqrt{\delta}}$ and $\cdl$ is then a nonnegative function. First note that the coordinates of
$X$ in the normal local frame are
$\left(\frac{\sdl(r)}{r}x_i\right)_{1\leqslant i\leqslant n}$. Moreover Heintze has proved that $\displaystyle\sum_{i=1}^{n+1}\left|\gradm\left(\frac{\sdl(r)}{r}x_{i}\right)\right|^2\leqslant n-\dxtd$. Then from the variational characterization of the first eigenvalue and the previous proof we get
\begin{align*}\la\nldx&\leqslant\frac{1}{V(M)}\insm(n-\dxtd)\vol\\
&\leqslant n(\nhinfc+\delta)\nldx-\frac{\nhinf}{V(M)}\insm\cdl\sdl\gradmrc\vol.
\end{align*}
We end this section by the following proposition.
\begin{pro}\label{IimpliqueL} Let \ \ $\amb$\ \  be a\ \  $(n+1)$-dimensional Riemannian ma\-nifold whose sectional curvature \ \ $K^N$\ \  satisfies\ \  $K^N\leqslant \delta$. If $\phi(M)$ lies in a convex ball of radius $\min\left(i(N),\frac{\pi}{4\sqrt{\delta}}\right)$ then for any $\varepsilon<1/4$, $(\Lambda_{\varepsilon})$ implies $(I_{p_0,6\varepsilon})$ where $p_0$ is the center of mass of $M$.  
\end{pro}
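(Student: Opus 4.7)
The plan is to run the Heintze variational argument at the center of mass $p_0$ (whose existence in the convex ball is provided by the fixed-point recalled just above), and then read off the hypothesis $(\Lambda_{\varepsilon})$ as a quantitative defect. Writing $(x_i)$ for normal coordinates of $N$ based at $p_0$, the functions $f_i=(\sdl(r)/r)x_i$ are $L^2$-orthogonal to constants, and Heintze's pointwise bounds $\sum_i|\gradm f_i|^2\leqslant n-\delta|\xt|^2$ and $\sum_i f_i^2=\sdl^2=|X|^2$ yield, through the Rayleigh quotient,
$$\la\,\Vol\,J_{p_0}(M)^2\;\leqslant\;\insm (n-\delta|\xt|^2)\,\vol. \qquad(A)$$
The chain of inequalities used in the proof of (\ref{inertiamom}) in fact establishes the sharper estimate
$$\insm(n-\delta|\xt|^2)\vol\;\leqslant\;\nhc\,\Vol\,J_{p_0}(M)^2-\nhinf\insm \cdl\sdl|\gradm r|^2\vol, \qquad(B)$$
and combining $(B)$ with $(\Lambda_{\varepsilon})$, i.e.\ $\la>\nhc/(1+\varepsilon)$, gives the defect estimate
$$\nhinf\insm\cdl\sdl|\gradm r|^2\vol\;\leqslant\;\frac{\varepsilon}{1+\varepsilon}\,\nhc\,\Vol\,J_{p_0}(M)^2. \qquad(\dagger)$$

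If $\delta\geqslant 0$ then the right-hand side of $(A)$ equals $n\Vol-\delta\insm\sdl^2|\gradm r|^2\vol\leqslant n\Vol$, hence $\la J_{p_0}(M)^2\leqslant n$, and together with $(\Lambda_{\varepsilon})$ this gives $\hc J_{p_0}(M)^2<1+\varepsilon$, even stronger than what is claimed. The substantive case, and the main obstacle, is $\delta<0$, in which the integral $|\delta|\insm\sdl^2|\gradm r|^2\vol$ now appears with the wrong sign on the right-hand side of $(A)$. The two pointwise observations needed are $\sdl/\cdl=\tanh(\sqrt{|\delta|}r)/\sqrt{|\delta|}\leqslant 1/\sqrt{|\delta|}$, so $\sdl\leqslant\cdl/\sqrt{|\delta|}$, together with $\nhinf>\sqrt{|\delta|}$, which follows from $\hc=\nhinfc+\delta>0$ (the statement being trivial if $\hc\leqslant 0$). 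Chaining these with $(\dagger)$ yields
$$|\delta|\insm\sdl^2|\gradm r|^2\vol\;\leqslant\;\sqrt{|\delta|}\insm\cdl\sdl|\gradm r|^2\vol\;\leqslant\;\nhinf\insm\cdl\sdl|\gradm r|^2\vol\;\leqslant\;\frac{\varepsilon}{1+\varepsilon}\nhc\,\Vol\,J_{p_0}(M)^2.$$

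Substituting into the identity form of $(A)$, namely $\la\,\Vol\,J_{p_0}(M)^2\leqslant n\Vol+|\delta|\insm\sdl^2|\gradm r|^2\vol$, and invoking $(\Lambda_{\varepsilon})$ a second time yields $\frac{\nhc(1-\varepsilon)}{1+\varepsilon}\Vol\,J_{p_0}(M)^2<n\Vol$, whence $\hc J_{p_0}(M)^2<(1+\varepsilon)/(1-\varepsilon)$. For $\varepsilon<1/4$ one has $2\varepsilon/(1-\varepsilon)<8\varepsilon/3<6\varepsilon$, so $\hc J_{p_0}(M)^2<1+6\varepsilon$, which is exactly $(I_{p_0,6\varepsilon})$. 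The crux is thus the transfer, specific to the case $\delta<0$, of the Rayleigh defect $(\dagger)$ into control of the offending quantity $|\delta|\insm\sdl^2|\gradm r|^2\vol$ via the comparisons $\sdl\leqslant\cdl/\sqrt{|\delta|}$ and $\nhinf>\sqrt{|\delta|}$.
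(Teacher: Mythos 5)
Your proof is correct and follows essentially the same route as the paper: both run the Heintze chain at the center of mass, extract the Rayleigh defect from $(\Lambda_\varepsilon)$, and in the case $\delta<0$ transfer it to $|\delta|\nldxt$ via $\sqrt{|\delta|}\sdl\leqslant\cdl$ and $\sqrt{|\delta|}\leqslant\nhinf$, before applying $(\Lambda_\varepsilon)$ a second time. Your bookkeeping actually yields the marginally sharper bound $\hc J_{p_0}(M)^2<(1+\varepsilon)/(1-\varepsilon)$ versus the paper's $(1+\varepsilon)/(1-2\varepsilon)$, but the two arguments are otherwise the same.
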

\begin{proof} If $(\Lambda_{\varepsilon})$ holds then
$$n(\nhinfc+\delta)J_{p_0}(M)^2\leqslant(1+\varepsilon)\la J_{p_0}(M)^2\leqslant(n-\delta\nldxt)(1+\varepsilon)$$
If $\delta\geqslant 0$ then $(I_{p_0,\varepsilon})$ is satisfied. If $\delta<0$ from the proof of the inequality of Reilly we have 
\begin{align*}\la J_{p_0}(M)^2&\leqslant n(\nhinfc+\delta)J_{p_0}(M)^2-\frac{\nhinf}{V(M)}\insm\frac{\cdl}{\sdl}|\xt|_2^2\vol\\
&\leqslant\la J_{p_0}(M)^2(1+\varepsilon) 
\end{align*}
It follows that $\sqrt{|\delta|}\nhinf\nldxt\leqslant\la J_{p_0}(M)^2 \varepsilon$. Therefore
\begin{align*}n(\nhinfc+\delta)J_{p_0}(M)^2&\leqslant(n-\delta\nldxt)(1+\varepsilon)\\
&\leqslant\left(n+\frac{\sqrt{|\delta|}}{\nhinf}\la J_{p_0}(M)^2\varepsilon\right)(1+\varepsilon)
\end{align*}
Now noting that $\frac{|\delta|}{\nhinf^2}\leqslant 1$, $\la\leqslant n(\nhinfc+\delta)$ and $\varepsilon<1$, we get
\begin{align*}(\nhinfc+\delta)J_{p_0}(M)^2(1-2\varepsilon)\leqslant(1+\varepsilon)
\end{align*}
and if $\varepsilon<1/4$, then $(I_{p_0,6\varepsilon})$ is satisfied.
Note that if $\delta>0$, it is not necessary to suppose that $\phi(M)$ lies in a ball or radius $\frac{\pi}{4\sqrt{\delta}}$ to prove that $(\Lambda_{\varepsilon})$ implies $(I_{p_0,6\varepsilon})$.
\end{proof}
\section{An $L^2$-approach}
Throughout the paper we assume that $\phi(M)\subset B\left(p,\frac{\pi}{4\sqrt{\delta}}\right)$ for $\delta>0$. 
\begin{lem}\label{normztangent} If the pinching condition $\pinch$ holds then $\nldxt\leqslant\frac{2\varepsilon}{\nhinf^2}$.
\end{lem}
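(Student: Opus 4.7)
My plan is to retrace the chain of inequalities that produced (\ref{inertiamom}) in the preliminaries but sharpen the step that bounds $n\hxnu\sdl$. In Heintze's original argument one uses the crude pointwise estimate $|\langle X,\nu\rangle|\leq|X|=\sdl$, which throws away all information about the tangential component of $X$. I would instead apply Young's inequality
\[
n\nhinf|\langle X,\nu\rangle|\sdl \;\leq\; \frac{n\nhinf}{2}\bigl(|\langle X,\nu\rangle|^2+\sdl^2\bigr) \;=\; \frac{n\nhinf}{2}\bigl(2\xd-|\xt|^2\bigr),
\]
using the orthogonal decomposition $|X|^2=|\xt|^2+\langle X,\nu\rangle^2$. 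The extra negative contribution $-\tfrac{n\nhinf}{2}|\xt|^2$ that this refinement generates is exactly what will pinch $\nldxt$.

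Substituting this bound into Heintze's chain (and bounding the remaining $H$ factor by $\nhinf$ in the usual way) and dividing by $n\Vol$, I expect to arrive at
\[
1-\frac{\delta}{n}\nldxt \;\leq\; (\nhinfc+\delta)\nldx \;-\; \frac{\nhinfc}{2}\nldxt \;-\; \frac{\nhinf}{n\Vol}\insm\cdl\sdl\gradmrc\vol.
\]
Invoking the pinching hypothesis $\pinch$, i.e.\ $(\nhinfc+\delta)\nldx\leq 1+\varepsilon$, and rearranging yields
\[
\frac{\nhinfc}{2}\nldxt \;+\; \frac{\nhinf}{n\Vol}\insm\cdl\sdl\gradmrc\vol \;\leq\; \varepsilon+\frac{\delta}{n}\nldxt.
\]

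The final step is to dispose of the $\frac{\delta}{n}\nldxt$ on the right. For $\delta\leq 0$ it is non-positive and can simply be dropped, immediately giving $\frac{\nhinfc}{2}\nldxt\leq\varepsilon$. For $\delta>0$ I would recycle the pointwise inequality $\delta\sdl^2-\cdl\sdl\nhinf\leq 0$ that is already established at the end of the preliminaries (it relies on (\ref{delta/hc}) together with the convex ball assumption); multiplying by $\gradmrc$ and integrating over $M$ produces
\[
\frac{\delta}{n}\nldxt \;\leq\; \frac{\nhinf}{n\Vol}\insm\cdl\sdl\gradmrc\vol,
\]
so the right-hand term is absorbed by its counterpart on the left, leaving $\frac{\nhinfc}{2}\nldxt\leq\varepsilon$, which is the desired bound.

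The only real obstacle is spotting the sharpening: once one observes that Heintze's bound $|\langle X,\nu\rangle||X|\leq|X|^2$ is wasteful and that the Young refinement injects a $-|\xt|^2$ on the correct side of the inequality, everything else is routine bookkeeping with preliminary inequalities already at hand, and the same pointwise compensation between $\delta\sdl^2$ and $\cdl\sdl\nhinf$ that saved the last term in the derivation of (\ref{inertiamom}) is reused to handle the $\delta>0$ case.
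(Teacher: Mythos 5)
Your proof is correct and rests on the same arithmetic inequality $(|X|-|\langle X,\nu\rangle|)^2\geqslant 0$ that the paper exploits: you inline it as Young's inequality inside Heintze's chain, while the paper applies it afterwards as the factoring $|X|^2-\langle X,\nu\rangle^2\leqslant 2\bigl(|X|^2-|\langle X,\nu\rangle||X|\bigr)$ and then simply cites the derivation of \eqref{inertiamom} from the preliminaries, which under $\pinch$ already gives $\frac{1}{\Vol}\insm\bigl(|X|^2-|\langle X,\nu\rangle||X|\bigr)\vol\leqslant\varepsilon/\nhinfc$. The two arguments are therefore identical up to bookkeeping, the paper's being shorter because it does not retrace the Heintze chain.
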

\begin{proof} We have
\begin{align*}\nldxt&=\frac{1}{V(M)}\left(\insm\xd\vol-\insm\langle X,\nu\rangle^2\vol\right)\\
&\leqslant\frac{2}{V(M)}\insm (|X|^2-|\langle X,\nu\rangle||X|)\vol\leqslant\frac{2\varepsilon}{\nhinf^2}\end{align*}
where the last inequality is coming from the proof of (\ref{inertiamom}) recalled in the preliminaries.
\end{proof}
\begin{lem}\label{estix} Let $Y=nH\cdl\nu-n\nhinfc X$. If $\pinch$ holds then $\nldy\leqslant 4n^2\nhinf^2\varepsilon$.
\end{lem}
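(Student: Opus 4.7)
The plan is to expand $|Y|^2$ into three pieces and control each against the pinching $\pinch$. Writing
$$|Y|^2 = n^2 H^2 \cdl^2 - 2n^2\nhinfc H\cdl\langle X,\nu\rangle + n^2\nhinfc^2 |X|^2,$$
and using $\cdl^2 = 1-\delta|X|^2$, I would bound the first term via $H^2\leqslant \nhinfc$, so that it integrates to at most $\nhinfc(1-\delta\nldx)$; the third term contributes exactly $n^2\nhinfc^2\nldx$. Everything therefore reduces to producing a sharp lower bound for the cross integral $\frac{1}{\Vol}\insm H\cdl\langle X,\nu\rangle\vol$.

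The key identity I would use for that cross integral is obtained by multiplying Heintze's divergence inequality (\ref{div}) by $\cdl\geqslant 0$ and integrating; an integration by parts using $\gradm \cdl = -\delta\sdl\gradm r$ evaluates $\insm\cdl\,\dvgce(\xt)\vol = \delta\insm|\xt|^2\vol$, and the outcome is
$$\frac{1}{\Vol}\insm H\cdl\langle X,\nu\rangle\vol \geqslant (1-\delta\nldx) - \frac{\delta}{n}\nldxt.$$
Plugging this back into the expansion and simplifying, one obtains
$$\frac{\nldy}{n^2} \leqslant \nhinfc\bigl[(\nhinfc+\delta)\nldx - 1\bigr] + \frac{2\delta}{n}\nhinfc\nldxt \leqslant \nhinfc\varepsilon + \frac{2\delta}{n}\nhinfc\nldxt,$$
the last step by the pinching hypothesis $\pinch$.

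To finish I would split on the sign of $\delta$. When $\delta\leqslant 0$ the correction term is non-positive and the bound $\nldy \leqslant n^2\nhinfc\varepsilon$ is immediate. When $\delta>0$, Lemma \ref{normztangent} supplies $\nldxt\leqslant 2\varepsilon/\nhinfc$, while (\ref{delta/hc}) gives $\delta\leqslant \nhinfc$ from the containment in $B\bigl(p,\pi/(4\sqrt\delta)\bigr)$; together these control the correction by a quantity of the form $(4/n)\nhinfc\varepsilon$, producing the stated bound $4n^2\nhinfc\varepsilon$. The only real subtlety, and the main obstacle, is the choice to multiply (\ref{div}) by $\cdl$ (rather than by $\sdl$, as was done for Heintze's inequality in the preliminaries): this is exactly what produces the $\cdl^2$ factor that lines up with the $H^2\cdl^2$ coming from $|Y|^2$ and makes the pinching visible in the cross term.
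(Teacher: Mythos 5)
Your argument is correct and follows essentially the same route as the paper's proof: expand $|Y|^2$, control the cross term by multiplying Heintze's inequality \eqref{div} by $\cdl$ and integrating by parts to produce the $\delta\nldxt$ correction, use $H^2\leqslant\nhinfc$ and $\cdl^2=1-\delta|X|^2$ to expose the pinching quantity $(\nhinfc+\delta)J_p(M)^2-1$, and finish with Lemma~\ref{normztangent} together with \eqref{delta/hc}. The only difference is presentational — you isolate the cross-integral bound as a displayed inequality before substituting back, whereas the paper carries out the same manipulations inline — and your closing remark about absorbing the $\delta>0$ correction into the constant $4n^2$ (valid for $n\geqslant 2$) matches the paper's bookkeeping.
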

\begin{proof} Using again (\ref{div}) and the previous lemmas we have
\begin{align*}\nldy&=\frac{n^2}{V(M)}\insm H^2\cdl^2\vol-2\frac{n^2}{V(M)}\nhinfc\insm\hxnu\cdl\vol+n^2\nhinf^4\nldx\displaybreak[2]\\
&\leqslant \frac{n^2}{V(M)}\insm H^2\cdl^2\vol+\frac{2n\nhinfc}{V(M)}\insm(\dvgce(\xt)\cdl-n\cdl^2)\vol+n^2\nhinf^4\nldx\displaybreak[2]\\
&=\frac{n^2}{V(M)}\insm H^2\cdl^2\vol+2n\delta\nhinfc\nldxt-\frac{2n^2\nhinfc}{V(M)}\insm\cdl^2\vol+n^2\nhinf^4\nldx\displaybreak[2]\\
&\leqslant -\frac{n^2}{V(M)}\nhinfc\intcdc+n^2\nhinf^4\nldx+2n\delta\nhinf^2\nldxt \displaybreak[2]\\
&=-n^2\nhinfc+n^2\nhinfc(\nhinf^2+\delta)J_p(M)^2+2n\delta\nhinf^2\nldxt \displaybreak[2]\\
&=n^2\nhinfc\varepsilon+2n\delta\nhinf^2\nldxt\displaybreak[2]\end{align*}
Now we conclude by applying the lemma \ref{normztangent} and (\ref{delta/hc}). 
\end{proof}
\begin{lem}\label{estiy} Let $W=\valxpidm\left( \delx+H\cdl\nu-h\xsrx\right)$. If $\pinch$ holds and  if $\delta\geqslant 0$, then $\nldw\leqslant6h\varepsilon$. If $\delta<0$ and $\Vol^{1/n}\nhinf\leqslant A$ then $\nldw\leqslant\left(2h+\frac{C(n)\nhinf^2}{h}A^{n/2}\right)\varepsilon$.
\end{lem}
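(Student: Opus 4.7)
Setting $Z := H\cdl \nu + \delta X$ so that $W = |X|^{1/2}(Z - hX/|X|)$, a direct expansion of $|W|^2$ yields
\[
\nldw = [|X|\,|Z|^2] - 2h[\langle Z, X\rangle] + \hc[|X|],
\]
where $[f]$ abbreviates $\frac{1}{\Vol}\int_M f\,\vol$. The plan is to estimate each of these three pieces and exploit the cancellation of the non-error parts.

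For the middle term, I would extend the integration-by-parts argument used in the proof of (\ref{inertiamom}): multiplying (\ref{div}) by $\cdl \geqslant 0$ and using the identity $\int_M \cdl\,\dvgce(\xt)\,\vol = \delta\int_M \sdl^2|\gradm r|^2\,\vol$ (obtained from $\int_M \dvgce(\cdl\xt)\,\vol = 0$ together with $\gradm \cdl = -\delta\sdl\gradm r$), one gets
\[
[H\cdl\langle X,\nu\rangle] \geqslant [\cdl^2] - \frac{\delta}{n}\nldxt.
\]
Since $\langle Z,X\rangle = H\cdl\langle X,\nu\rangle + \delta\sdl^2$ and $[\cdl^2] + \delta[\sdl^2] = 1$, this gives $[\langle Z,X\rangle]\geqslant 1 - \frac{\delta}{n}\nldxt$, hence $-2h[\langle Z,X\rangle]\leqslant -2h + \frac{2h\delta}{n}\nldxt$. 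For $\delta\geqslant 0$, Lemma \ref{normztangent} and $\delta\leqslant \nhinfc$ (cf.\ (\ref{delta/hc})) sharpen this to $\leqslant -2h + 4h\varepsilon/n$.

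The key step is the bound on $[|X|\,|Z|^2]$. Using the orthogonal decomposition $|Z|^2 = (H\cdl + \delta\langle X,\nu\rangle)^2 + \delta^2|\xt|^2$ together with the Cauchy--Schwarz inequality applied to the pair $(\nhinf,\sqrt{\delta})\cdot(\cdl,\sqrt{\delta}\sdl)$, we find $(\nhinf\cdl + \delta\sdl)^2 \leqslant (\nhinfc+\delta)(\cdl^2+\delta\sdl^2) = \hc$ for $\delta\geqslant 0$; together with $|\langle X,\nu\rangle|\leqslant\sdl$ this forces the pointwise bound $(H\cdl+\delta\langle X,\nu\rangle)^2\leqslant \hc$. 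Hence $[|X|\,|Z|^2] \leqslant \hc[|X|] + \delta^2[|X|\,|\xt|^2]$, and the error term is controlled via $\ninfx = \|\sdl\|_\infty \leqslant 1/\sqrt{2\delta}$ (valid because $\phi(M)\subset B(p,\pi/(4\sqrt\delta))$), Lemma \ref{normztangent}, and $\delta\leqslant \nhinfc$, yielding $\delta^2[|X|\,|\xt|^2]\leqslant\sqrt{2}\,h\varepsilon$.

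Assembling the pieces, for $\delta\geqslant 0$ we have $\nldw\leqslant 2\hc[|X|]-2h+(\sqrt{2}+4/n)h\varepsilon$, and the Cauchy--Schwarz estimate $[|X|]\leqslant [|X|^2]^{1/2}\leqslant \sqrt{1+\varepsilon}/h$, combined with $\sqrt{1+\varepsilon}-1\leqslant\varepsilon/2$, converts $2\hc[|X|]-2h$ into $\leqslant h\varepsilon$; hence $\nldw\leqslant 6h\varepsilon$. For $\delta<0$, the term $-2h[\langle Z,X\rangle]$ is already $\leqslant -2h$, but the Cauchy--Schwarz step now produces $(H\cdl+\delta\langle X,\nu\rangle)^2\leqslant \hc+|\delta|(\nhinf\sdl+\cdl)^2$; the extra contribution $|\delta|[|X|(\nhinf\sdl+\cdl)^2]$ must be controlled by an $L^\infty$ bound on $|X|$ coming from the Hoffman--Spruck Sobolev inequality under the hypothesis $\Vol^{1/n}\nhinf\leqslant A$. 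This is the main technical obstacle: upgrading the $L^2$-type pinching into an $L^\infty$ estimate on $\sdl$ through a Nash--Moser-type iteration is what produces the sharp factor $A^{n/2}$ in the stated extra error $C(n)\nhinfc A^{n/2}\varepsilon/h$.
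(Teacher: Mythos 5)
Your argument for $\delta\geqslant 0$ is sound and closely parallels the paper's, just packaged a little differently: where you decompose $Z=\delta X+H\cdl\nu$ into normal and tangential parts and apply Cauchy--Schwarz to the normal component, the paper uses the exact algebraic identity
\[
|\delta X+H\cdl\nu|^2 \;=\; H^2+\delta-\delta\,|HX-\cdl\nu|^2,
\]
which for $\delta\geqslant 0$ immediately gives $|Z|^2\leqslant h^2$ with no residual $\delta^2|\xt|^2$ term. Your version carries the extra $\delta^2[\,|X|\,|\xt|^2\,]$ error, but since Lemma \ref{normztangent} makes $\nldxt=O(\varepsilon/\nhinf^2)$ this is absorbed, and you recover the $6h\varepsilon$ bound for $n\geqslant 2$.

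The $\delta<0$ case, however, has a genuine gap. Your pointwise estimate
\[
(H\cdl+\delta\langle X,\nu\rangle)^2\;\leqslant\; h^2+|\delta|\,(\nhinf\sdl+\cdl)^2
\]
is correct, but the correction term $(\nhinf\sdl+\cdl)^2$ is \emph{not} a pinching-small quantity: on a geodesic sphere (where $H=\cdl/\sdl$ and $\langle X,\nu\rangle=\sdl$) it equals $4\cdl^2$, which is bounded away from zero. Consequently $|\delta|\,[\,|X|\,(\nhinf\sdl+\cdl)^2\,]$ is of order $|\delta|\ninfx$, \emph{without} a factor of $\varepsilon$, so the final estimate you obtain is $\nldw\leqslant 2h\varepsilon+O(\nhinf^2 A^{n/2}/h)$ rather than the claimed $\big(2h+C(n)\nhinf^2A^{n/2}/h\big)\varepsilon$. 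The Hoffman--Spruck iteration (Lemma \ref{majzinf}) only controls $\ninfx$; it cannot supply the missing $\varepsilon$.

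The reason the paper's route works and yours does not is precisely the identity above: for $\delta<0$ it gives $|Z|^2\leqslant h^2+|\delta|\,|HX-\cdl\nu|^2$, and the quantity $|HX-\cdl\nu|^2$ \emph{does} vanish on geodesic spheres. The paper then shows, using \eqref{div} twice and the pinching, that both $\nhinf^2\intsdc-\insm\hxnu\cdl\,\vol$ and $\intcdc-\insm\hxnu\cdl\,\vol$ are $O(\Vol\,\varepsilon)$, so $[\,|HX-\cdl\nu|^2\,]=O(\varepsilon)$. This is the cancellation your triangle/Cauchy--Schwarz step throws away when you replace $|H\cdl+\delta\langle X,\nu\rangle|$ by $\nhinf\cdl+|\delta|\sdl$, since the signs of $H$ and $\langle X,\nu\rangle$ are correlated at (near-)equality and the bound $|H|\leqslant\nhinf$, $|\langle X,\nu\rangle|\leqslant\sdl$ breaks that correlation. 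To repair the proof you should work with the exact identity (equivalently, expand $(H\cdl+\delta\langle X,\nu\rangle)^2+\delta^2|\xt|^2$ and regroup so that the error is proportional to $|HX-\cdl\nu|^2$) rather than a pointwise Cauchy--Schwarz bound.
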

\begin{proof} First we have
\begin{align}\label{equa}\nldw&\leqslant\frac{1}{\Vol}\insm\left( \valx|\delx+H\cdl\nu|^2-2h\langle\delx+H\cdl\nu,X\rangle+\hc\valx\right)\vol\notag\\
&\leqslant\frac{1}{\Vol}\insm\left( \valx|\delx+H\cdl\nu|^2-2h\langle\delx+H\cdl\nu,X\rangle\right)\vol+\hc\nldxpc\end{align}
Let us compute the first term
\begin{align}\label{ineg1}\frac{1}{\Vol}\insm\valx|\delx+H\cdl\nu|^2\vol&=\frac{1}{\Vol}\insm\valx\left(\delta^2\xd+2\delta\cdl\hxnu+H^2\cdl^2\right)\vol\notag\\
&\hspace{-2,5cm}=\frac{1}{\Vol}\insm\valx\left(\delta(1-\cdl^2)+H^2(1-\delta \sdl^2)H^2+2\delta\cdl\hxnu\right)\vol\displaybreak[2]\notag\\
&\hspace{-2,5cm}=\frac{1}{\Vol}\insm\valx(H^2+\delta-\delta|HX-\cdl\nu|^2)\vol\notag\\
&\hspace{-2,5cm}\leqslant\hc\nldxpc-\frac{\delta}{\Vol}\insm\valx|HX-\cdl\nu|^2\vol\displaybreak[2]\end{align}
Now let us compute the last two terms of (\ref{equa})
\begin{align*}&-\frac{2h}{\Vol}\insm\langle\delx+H\cdl\nu,X\rangle\vol+\hc\nldxpc\\
&\leqslant -\frac{2\delta h}{\Vol}\intsdc+\frac{2h}{n\Vol}\insm\dvgce(\xt)\cdl\vol-\frac{2h}{\Vol}\intcdc+\hc\nldxpc\displaybreak[2]\\
&=-2h+\frac{2h\delta}{n}\nldxt+\hc\nldxpc\displaybreak[2]\end{align*}
Therefore reporting this and (\ref{ineg1}) in (\ref{equa}), we get
$$\nldw\leqslant 2h\varepsilon+\frac{2h\delta}{n}\nldxt-\frac{\delta}{\Vol}\insm\valx|HX-\cdl\nu|^2\vol$$
Now if $\delta\geqslant 0$ then $\nldw\leqslant 6h\varepsilon$. If $\delta<0$ we have
$$\nldw\leqslant2h\varepsilon+\frac{|\delta|}{\Vol}\ninfx\insm|HX-\cdl\nu|^2\vol$$
Moreover,
$$\insm|HX-\cdl\nu|^2\vol\leqslant\nhinfc\intsdc-2\insm\hxnu\cdl\vol+\intcdc$$
Now from the proof of (\ref{inertiamom}) recalled in the preliminaries and the pinching condition we have
$$\nhinfc\intsdc-\insm\hxnu\cdl\vol\leqslant nh^2\nldx C\leqslant n\Vol \varepsilon$$
and
\begin{align*}\intcdc-\insm\hxnu\cdl\vol&\leqslant \frac{1}{n}\insm\dvgce(\xt)\cdl\vol=\delta\Vol\nldxt\end{align*}
Then we have proved that if $\delta<0$ then
\begin{align}\nldw\leqslant(2h+n|\delta|\ninfx)\varepsilon\end{align}
Now the researched inequality is a straightforward consequence of the following lemma
\begin{lem}\label{majzinf} If $\Vol^{1/n}\nhinf\leqslant A$ then $\ninfx\leqslant K(n)A^{n/2}\nldxpc$.
\end{lem}
\end{proof}
The proof of the lemma \ref{majzinf} uses a Nirenberg-Moser type of proof (see \cite{colgro}) based on a Sobolev inequality due to Hoffman and Spruck (see \cite{hofspr1}, \cite{hofspr2} and \cite{misi}) which is available under the  conditions on the injectivity radius of $N$ and the volume of $M$ contained in the definition of ${\mathcal H}_V(n,N)$.

\begin{proofoflem} Let us put $\varphi=|X|$. An easy computation shows that $|d\varphi^{2\alpha}|\leqslant 2\alpha\varphi^{2\alpha-1}\cdl$. Then if $\delta\geqslant 0$, $|d\varphi^{2\alpha}|\leqslant 2\alpha\varphi^{2\alpha-1}$. If not we have $|d\varphi^{2\alpha}|\leqslant 2\alpha\varphi^{2\alpha-1}\sqrt{1-\delta\sdl^2}\leqslant2\alpha(1+\sqrt{|\delta|}\|\varphi\|_{\infty})\varphi^{2\alpha-1}\leqslant2\alpha(1+\nhinf\|\varphi\|_{\infty})\varphi^{2\alpha-1}$. Moreover since $1\leqslant(\nhinf^2+\delta)\|X\|_2^2\leqslant2\nhinf^2\|X\|_{\infty}^2$ we deduce that for $\delta<0$ we have $|d\varphi^{2\alpha}|\leqslant 4\alpha\varphi^{2\alpha-1}\nhinf\|\varphi\|_{\infty}$. Hence, using the Sobolev inequality (see \cite{hofspr1}, \cite{hofspr2} and \cite{misi})
\begin{align}\label{simon}
\|f\|_\frac{n}{n-1}\leqslant K(n)\Vol ^\frac{1}{n}\bigl(\|df\|_1+\|Hf\|_1\bigr)
\end{align}
we get for any $\alpha\geqslant 1$ and $f=\varphi^{2\alpha}$
\begin{align*}\|\varphi\|_{\frac{2\alpha n}{n-1}}^{2\alpha}\leqslant K(n)\Vol^{1/n}2\alpha\nhinf\|\varphi\|_{\infty}\|\varphi\|_{2\alpha-1}^{2\alpha-1}
\end{align*}
Then putting $\nu=\frac{n}{n-1}$ and $\alpha=\frac{a_p+1}{2}$ where $a_{p+1}=(a_p+1)\nu$ and $a_0=2$ we have
\begin{align*}\|\varphi\|_{a_{p+1}}^{\frac{a_{p+1}}{\nu}}&\leqslant K(n)\Vol ^\frac{1}{n}(a_p+1)\nhinf\|\varphi\|_{\infty}\|\varphi\|_{a_p}^{a_p}\\
&\leqslant K(n)\Vol ^\frac{1}{n}a_p\nhinf\|\varphi\|_{\infty}\|\varphi\|_{a_p}^{a_p}
\end{align*}
Then by iterating we find
\begin{align*}\|\varphi\|_{a_{p+1}}^{\frac{a_{p+1}}{\nu^{p+1}}}&\leqslant\left(K(n)\Vol ^\frac{1}{n}a_p\nhinf\|\varphi\|_{\infty}\right)^{1/\nu^p}\|\varphi\|_{a_p}^{\frac{a_p}{\nu^p}}\\
&\leqslant\left(\prod_{i=0}^p a_i^{\frac{1}{\nu^i}}\right)\left(K(n)\Vol ^\frac{1}{n}\nhinf\|\varphi\|_{\infty}\right)^{n\left(1-\frac{1}{\nu^{p+1}}\right)}\|\varphi\|_{a_0}^{a_0}
\end{align*}
Now since $\frac{a_p}{\nu^p}$ converges to $a_0+n$ and $a_0=2$ we get
\begin{align*}\|\varphi\|_{\infty}^2\leqslant C(n)\left(\Vol ^\frac{1}{n}\nhinf\right)^n\|\varphi\|_2^2\leqslant C(n)A^n\|\varphi\|_2^2
\end{align*}
\end{proofoflem}

Let's introduce now the function $\psi=\valx^{1/2}\left| \valx-\frac{1}{h}\right|=\valx^{1/2}\left|X-\frac{1}{h}\xsrx\right|$. We will give an $L^2$-estimate of $\psi$.  
\begin{lem}\label{normldf} If $\pinch$ holds and $\delta\geqslant 0$ then 
$\nlups\leqslant\frac{C}{h^{3/2}}\varepsilon^{1/2}$. If $\delta<0$ and $\max(\Vol^{1/n}\nhinf,\frac{\nhinf}{h})\leqslant A$ then $\nlups\leqslant \frac{C(n)A^{1+n/4}}{h^{3/2}}\varepsilon^{1/2}$.
\end{lem}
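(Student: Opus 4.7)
The starting point is the algebraic identity that links the two vector fields already estimated in Lemmas \ref{estix} and \ref{estiy}. Since $h^2=\nhinfc+\delta$ and $Y=nH\cdl\nu-n\nhinfc X$, one has
$$\delta X+H\cdl\nu=h^2X+\tfrac{1}{n}Y,$$
so plugging this into the definition of $W$ gives
$$W=\valxpidm\bigl(h^2X+\tfrac{1}{n}Y-h\xsrx\bigr)=h\valxpidm\bigl(hX-\xsrx\bigr)+\tfrac{1}{n}\valxpidm Y.$$
Because $\xsrx$ is the unit vector in the direction of $X$, we have the pointwise equality $|hX-\xsrx|=|h|X|-1|$, hence $\valxpidm|hX-\xsrx|=h\psi$. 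The previous identity therefore yields the pointwise bound
$$h^{2}\psi\leqslant |W|+\tfrac{1}{n}\valxpidm|Y|.$$

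Next I integrate this over $M$ and apply the Cauchy--Schwarz inequality (in the normalized $L^p$ sense, so $\|f\|_1\leqslant\|f\|_2$) to each term:
$$\|\psi\|_1\leqslant \frac{1}{h^{2}}\Bigl(\|W\|_2+\tfrac{1}{n}\ninfx^{1/2}\|Y\|_2\Bigr).$$
Lemma \ref{estiy} gives $\|W\|_2\leqslant\sqrt{6h\varepsilon}$ in the case $\delta\geqslant 0$, which contributes a term of the desired order $h^{-3/2}\varepsilon^{1/2}$. For the second term, Lemma \ref{estix} bounds $\|Y\|_2\leqslant 2n\nhinf\sqrt{\varepsilon}$, while Lemma \ref{majzinf} together with the pinching estimate $\nldx\leqslant(1+\varepsilon)/h^{2}\leqslant 2/h^{2}$ controls $\ninfx^{1/2}\leqslant C(n)A^{n/4}h^{-1/2}$. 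Combining these and using $\nhinf\leqslant h$ (valid for $\delta\geqslant 0$) yields $\frac{1}{nh^{2}}\ninfx^{1/2}\|Y\|_2\leqslant C(n,A)\varepsilon^{1/2}/h^{3/2}$, completing the $\delta\geqslant 0$ case.

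For $\delta<0$, I use the version of Lemma \ref{estiy} giving $\|W\|_2^{2}\leqslant\bigl(2h+C(n)\nhinf^{2}A^{n/2}/h\bigr)\varepsilon$, and the extra hypothesis $\nhinf/h\leqslant A$ absorbs the factor of $\nhinf/h$ that now arises when bounding $\|Y\|_2/h^{2}$. Both pieces then give the claimed bound $C(n)A^{1+n/4}h^{-3/2}\varepsilon^{1/2}$.

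The only delicate point is the control of $\ninfx$ via Lemma \ref{majzinf}, since it is what supplies the $A^{n/4}$ factor and the additional $h^{-1/2}$ coming from $\nldx\leqslant 2/h^{2}$; everything else is algebraic manipulation and routine application of Cauchy--Schwarz combined with the two preceding $L^{2}$-estimates.
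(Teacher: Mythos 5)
Your decomposition of $W$ --- writing $\delta X + H\cdl\nu = h^2X + \frac{1}{n}Y$, hence $W = h\valxpidm(hX - \xsrx) + \frac{1}{n}\valxpidm Y$ and $\valxpidm|hX - \xsrx| = h\psi$ --- is exactly the decomposition used in the paper, and the resulting pointwise bound $h^2\psi \leqslant |W| + \frac{1}{n}\valxpidm|Y|$ is correct.

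The trouble is in passing to $L^1$-norms. You bound the second term by $\ninfx^{1/2}\|Y\|_2$, which forces you into Lemma~\ref{majzinf} to control $\ninfx$. But Lemma~\ref{majzinf} needs the hypothesis $\Vol^{1/n}\nhinf\leqslant A$, which the present lemma does \emph{not} assume when $\delta\geqslant 0$, and it injects a factor $A^{n/4}$ into your constant. As you yourself note, you end up with $C(n,A)$ for $\delta\geqslant 0$, whereas the lemma asserts a constant free of $A$ under only the hypothesis $\pinch$. So your argument establishes a strictly weaker statement under a stronger hypothesis in that case. The sharper and correct move is Cauchy--Schwarz applied directly to the product $\valxpidm\cdot|Y|$:
\begin{equation*}
\frac{1}{\Vol}\insm\valxpidm\,|Y|\,\vol \leqslant \left(\frac{1}{\Vol}\insm|X|\,\vol\right)^{1/2}\|Y\|_2 \leqslant \nldxpc^{1/2}\|Y\|_2,
\end{equation*}
after which $\pinch$ gives $\nldx\leqslant(1+\varepsilon)/h^2\leqslant 2/h^2$ immediately --- no $L^\infty$-control of $X$ and no $A$ enter at all. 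Substituting $\nldxpc^{1/2}$ for $\ninfx^{1/2}$ closes the $\delta\geqslant 0$ case exactly as in the paper. For $\delta<0$ your version happens to land correctly, since there $\Vol^{1/n}\nhinf\leqslant A$ and $\nhinf/h\leqslant A$ are genuinely available and the target constant is already allowed to carry $A^{1+n/4}$; still, the $\nldxpc^{1/2}$-route is simpler there too.
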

\begin{proof} First we have 
\begin{align*}\psi&=|X|^{1/2}\left|\frac{1}{h^2}(h^2 X-\delta X-H\cdl\nu)+\frac{1}{h^2}\left(\delta X+H\cdl\nu-h\frac{X}{|X|}\right)\right|\\
&\leqslant\frac{|X|^{1/2}}{nh^2}|Y|+\frac{1}{h^2}|W|
\end{align*}
Then by H\"older inequality we get
$$\nlups\leqslant\frac{1}{h^2}\left(\frac{1}{n}\|X\|_2^{1/2}\|Y\|_2+\|W\|_2\right)$$
From Lemmas \ref{estix} and \ref{estiy}, we deduce easily the inequality for $\delta\geqslant 0$ and for $\delta<0$ we get
\begin{align*}\nlups&\leqslant\frac{C(n)}{h^2}\left(\frac{\nhinf}{h^{1/2}}+h^{1/2}+\frac{\nhinf}{h^{1/2}}A^{n/4}\right)\varepsilon^{1/2}\\
&\leqslant\frac{C(n)}{h^2}(Ah^{1/2}+h^{1/2}+A^{1+n/4}h^{1/2})\varepsilon^{1/2}
\end{align*}
Now from (\ref{simon}) by taking $f=1$ we see that
\begin{align}\label{Abelow}K(n)\leqslant\Vol^{1/n}\nhinf\leqslant A\end{align}
This allows us to obtain the desired inequality for $\delta<0$. 
\end{proof}
\begin{lem}\label{ninff} Let\ \ \ $\varepsilon<1$\ \ \ be\ a\ positive\ real\ number\ and\ let\ us\ assume\ that\ \ \ $\Vol^{1/n}\nhinf\leqslant A$ (resp. $\max(\Vol^{1/n}\nhinf,\frac{\nhinf}{h})\leqslant A$ for $\delta<0$). Then there exist constants $C:=C(n)$ and $\alpha:=\alpha(n)$ so that if $\pinch$ holds then
$$\|\psi\|_{\infty}\leqslant\frac{CA^{\alpha}}{h^{3/2}}\varepsilon^{\frac{1}{2(2n+1)}}$$
\end{lem}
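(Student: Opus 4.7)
The proof should mirror the Nirenberg–Moser iteration carried out in Lemma \ref{majzinf}, but starting from the $L^1$ estimate $\|\psi\|_1\leq C(A,n)h^{-3/2}\varepsilon^{1/2}$ provided by Lemma \ref{normldf} rather than from an $L^2$ bound. The first step is to compute, for $\alpha\geq 1$, a pointwise estimate on $|d\psi^{2\alpha}|$. Writing $\psi^{2\alpha}=|X|^{\alpha}||X|-1/h|^{2\alpha}$ and using $|d|X||\leq\cdl$, one finds
\[
|d\psi^{2\alpha}|\leq\alpha\,\cdl\,|X|^{\alpha-1}\bigl||X|-1/h\bigr|^{2\alpha-1}\,\bigl|3|X|-1/h\bigr|.
\]
The factor $\cdl$ is controlled in both signs of $\delta$ via $\cdl\leq 1$ (if $\delta\geq 0$) or $\cdl\leq 1+\sqrt{|\delta|}\|X\|_\infty\leq C(A,n)$ (if $\delta<0$, using $\sqrt{|\delta|}\leq h$ together with $\nhinf/h\leq A$). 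The factor $|3|X|-1/h|$ is bounded by $4\|X\|_\infty$, and Lemma \ref{majzinf} supplies $\|X\|_\infty\leq C(n)A^{n/2}$; also $1/h\leq\|X\|_\infty$ by the analogue of (\ref{Abelow}).

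Next, plug $f=\psi^{2\alpha}$ into the Hoffman–Spruck Sobolev inequality (\ref{simon}), dominating the mean-curvature term by $\|H\psi^{2\alpha}\|_1\leq\nhinf\|\psi\|_\infty\|\psi\|_{2\alpha-1}^{2\alpha-1}$ and rewriting the gradient contribution in terms of $\|\psi\|_{2\alpha-1}^{2\alpha-1}$ via the pointwise bound above (absorbing the residual weight $|X|^{-1/2}$ through the uniform bounds on $|X|$ and $1/h$). This leads to a recursion of the form
\[
\|\psi\|_{a_{p+1}}^{a_{p+1}/\nu}\leq C(A,n)\,a_p\bigl(1+\nhinf\|\psi\|_\infty\bigr)\,\|\psi\|_{a_p}^{a_p},
\]
with $\nu=n/(n-1)$ and $a_0=1$, $a_{p+1}=(a_p+1)\nu$. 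Raising to powers $1/\nu^{p+1}$ and iterating exactly as in Lemma \ref{majzinf} — using $a_p/\nu^p\to a_0+n=n+1$ and $\sum_i 1/\nu^i=n$ — then absorbing the $\|\psi\|_\infty$-factors on the right into the left-hand side, one extracts an inequality of the form $\|\psi\|_\infty^{2n+1}\leq C(A,n)\,\|\psi\|_1$. Substituting Lemma \ref{normldf} and re-expressing $h^{-1}$ in terms of $A$ via $1/h\leq\|X\|_\infty\leq CA^{n/2}$ yields the stated estimate.

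The main obstacle is that $\psi$ carries the factor $|X|^{1/2}$, which can vanish on $M$; unlike the function $\varphi=|X|$ of Lemma \ref{majzinf}, one does not have the clean bound $|d\psi^{2\alpha}|\leq C\alpha\psi^{2\alpha-1}$, because the ratio $|X|^{\alpha-1}||X|-1/h|^{2\alpha-1}/\psi^{2\alpha-1}=|X|^{-1/2}$ is singular. Controlling this singularity by the $L^\infty$-bounds on $|X|$ and on $1/h$ (both coming from Lemma \ref{majzinf}) is what produces the extra power $A^\alpha$ in the constant and also degrades the exponent on $\varepsilon$ from the $\varepsilon^{1/2}$ of Lemma \ref{normldf} to $\varepsilon^{1/(2(2n+1))}$; the integer $2n+1$ reflects precisely the additional round of absorption required by this weight. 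All remaining steps are bookkeeping identical to those in Lemma \ref{majzinf}.
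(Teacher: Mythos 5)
Your outline correctly identifies the engine (Nirenberg--Moser iteration seeded by the $L^1$ bound of Lemma \ref{normldf}) and correctly computes the starting pointwise estimate $|d\psi^{2\alpha}|\leqslant\alpha\,|X|^{\alpha-1}\,\bigl||X|-\tfrac1h\bigr|^{2\alpha-1}\,\bigl|3|X|-\tfrac1h\bigr|\,|d|X||$, but the way you reduce it to a usable recursion is wrong, and it is exactly at the point you flag as ``the main obstacle''. You want to write the right-hand side as (bounded factor)$\cdot\psi^{2\alpha-1}$, which forces a residual weight $|X|^{-1/2}$, and you claim this weight is ``controlled by the $L^\infty$-bounds on $|X|$ and on $1/h$''. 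It is not: $|X|^{-1/2}$ is unbounded as $|X|\to 0$, and the only information available at this stage from Lemma \ref{majzinf} is an \emph{upper} bound $\|X\|_\infty\leqslant K(n)A^{n/2}\|X\|_2$. A lower bound on $|X|$ is precisely what Lemma \ref{entre2sphere} eventually delivers, but that lemma is downstream of the present one, so invoking it would be circular. The arithmetic of your iteration also betrays this: with $a_{p+1}=(a_p+1)\nu$, $a_0=1$, you have $a_p/\nu^p\to n+1$, which would yield $\|\psi\|_\infty^{n+1}\leqslant C\|\psi\|_1$; the asserted $\|\psi\|_\infty^{2n+1}\leqslant C\|\psi\|_1$ does not follow by ``absorbing $\|\psi\|_\infty$-factors'', and the exponent $\tfrac1{2(2n+1)}$ in the statement is a hard arithmetic consequence of hitting $2n+1$, not a degradation one can pick up informally.

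The paper's proof avoids the singularity entirely by a different factorisation. Writing $|d\psi^{2\alpha}|=\alpha\psi^{2\alpha-2}\,|d\psi^2|$ with $|d\psi^2|=\bigl||X|-\tfrac1h\bigr|\,\bigl|3|X|-\tfrac1h\bigr|\,|d|X||$, the residual factor is $\bigl||X|-\tfrac1h\bigr|\,\bigl|3|X|-\tfrac1h\bigr|\leqslant 3\bigl(\|X\|_\infty+\tfrac1h\bigr)^2$, which is controlled by \emph{upper} bounds only; no lower bound on $|X|$ is needed. The Sobolev inequality then produces the recursion $\|\psi\|_{a_{p+1}}^{a_{p+1}/\nu}\leqslant K(n)A\alpha E'\|\psi\|_{a_p}^{a_p}$ with $a_{p+1}=(a_p+2)\nu$ and $a_0=1$, so that $a_p/\nu^p\to 1+2n$ and one lands on $\|\psi\|_\infty^{1+2n}\leqslant K(n)(AE')^n\|\psi\|_1$, from which the stated estimate follows after substituting the bounds $\|X\|_\infty\leqslant K(n)A^{n/2}/h$, $1/h\leqslant\|X\|_\infty$ and Lemma \ref{normldf}. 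To repair your argument you should switch from the $\psi^{2\alpha-1}$ reduction to the $\psi^{2\alpha-2}$ one and adjust the sequence $a_p$ accordingly.
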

\begin{proof} Let $\alpha\geqslant 1$ then
\begin{align*}|d\psi^{2\alpha}|&=\alpha\psi^{2\alpha-2}|d\psi^2|\\
&=\alpha\psi^{2\alpha-2}\left|\valx-\frac{1}{h}\right|\left|3\valx-\frac{1}{h}\right||d\valx|\\
&\leqslant3\alpha\psi^{2\alpha-2}\left(\ninfx+\frac{1}{h}\right)^2\cdl
\end{align*}
Proceeding as in the proof of Lemma \ref{majzinf} we find that $|d\psi^{2\alpha}|\leqslant\alpha E\psi^{2\alpha-2}$ where $E=3\left(\ninfx+\frac{1}{h}\right)^2$ if $\delta\geqslant 0$ and $E=3\left(\ninfx+\frac{1}{h}\right)^2(1+\nhinf\ninfx)$ if not. It follows that
\begin{align*}\|\psi\|_{\frac{2\alpha n}{n-1}}^{2\alpha}&\leqslant K(n)\Vol^{1/n}(\alpha E+\|\psi\|_{\infty}^2\nhinf)\|\psi\|_{2\alpha-2}^{2\alpha-2}
\end{align*}
Now we know that for $\delta\geqslant 0$, $\nhinf\leqslant h$ and $\frac{\delta}{\nhinf^2}\leqslant 1$. Moreover, $h\leqslant\nhinf$ for $\delta<0$. From these facts we deduce that
$$\|\psi\|_{\frac{2\alpha n}{n-1}}^{2\alpha}\leqslant K(n)A\alpha E'\|\psi\|_{2\alpha-2}^{2\alpha-2}$$
where $E'=\frac{E}{h}+\ninfx\left(\ninfx+\frac{1}{h}\right)^2$. Now we put $a_{p+1}=(a_p+2)\nu$ with $\nu=\frac{n}{n-1}$, $a_0=1$  and $\alpha=\frac{a_p+2}{2}$. Then noting that $\frac{a_p}{\nu^p}$ converges to $a_0+2n$, the end of the proof is similar to that Lemma \ref{majzinf} and we find
$$\|\psi\|_{\infty}^{1+2n}\leqslant K(n)(AE')^n\nlups$$
Now Lemma \ref{majzinf} and  \ref{normldf} combining with (\ref{Abelow}) allow us to conclude that $\|\psi\|_{\infty}\leqslant K(n)\frac{A^{\alpha(n)}}{h^{3/2}}\varepsilon^{\frac{1}{2(2n+1)}}$. 
\end{proof}
\begin{lem}\label{entre2sphere} Let $\varepsilon<1$ be a positive real and let us assume that $\Vol^{1/n}\nhinf\leqslant A$ (resp. $\max(\Vol^{1/n}\nhinf,\frac{\nhinf}{h})\leqslant A$ for $\delta<0$). Then there exist constants $C:=C(n)$ and $\alpha:=\alpha(n)$ so that if $\varepsilon^{\frac{1}{2n+1}}\leqslant\frac{1}{16(CA^{\alpha})^2}$ and $\pinch$ holds then
$$\left|\valx-\frac{1}{h}\right|\leqslant\frac{CA^{\alpha}}{h}\varepsilon^{\frac{1}{2(2n+1)}}\ \ \textmd{and}\ \ \left|r-\sdl^{-1}\left(\frac{1}{h}\right)\right|\leqslant\frac{CA^{\alpha}}{h}\varepsilon^{\frac{1}{2(2n+1)}}$$
\end{lem}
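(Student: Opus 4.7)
The plan is to upgrade the $L^\infty$ bound on $\psi$ provided by Lemma \ref{ninff} into a pointwise estimate on $\bigl|\valx-\frac{1}{h}\bigr|$, then to convert this bound to one on $\bigl|r-\sdl^{-1}(1/h)\bigr|$ via the identity $\valx=\sdl(r)$ (which holds because $X=\sdl(r)\gradn r$ and $|\gradn r|=1$).

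First I will analyze the model function $g(t):=t^{1/2}\bigl|t-\frac{1}{h}\bigr|$ on $[0,\infty)$. An elementary study reveals that $g$ vanishes only at $t=0$ and $t=1/h$, has a unique interior local maximum at $t=1/(3h)$ of value $m:=\frac{2}{3\sqrt{3}}h^{-3/2}$, and is strictly monotone on each of $[0,1/(3h)]$, $[1/(3h),1/h]$, $[1/h,\infty)$. Consequently, for any $\eta<m$, the sublevel set $\{g\leq\eta\}$ splits into two disjoint intervals $[0,a_\eta]\cup[b_\eta,c_\eta]$ with $a_\eta<\frac{1}{3h}<b_\eta<\frac{1}{h}<c_\eta$. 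The smallness hypothesis $\varepsilon^{1/(2n+1)}\leq 1/(16(CA^\alpha)^2)$ combined with Lemma \ref{ninff} forces $\|\psi\|_\infty\leq\frac{1}{4}h^{-3/2}<m$, so this dichotomy applies to $\eta=\|\psi\|_\infty$.

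Next, since $\valx$ is continuous on the connected manifold $M$ and $g(\valx(x))=\psi(x)\leq\|\psi\|_\infty$ everywhere, its image lies entirely in one of the two components. The small branch $[0,a_\eta]$ is ruled out by the lower bound $\nldxpc\geq 1/h$ supplied by (\ref{inertiamom}): if $\valx\leq a_\eta<\frac{1}{3h}$ pointwise, then $\nldxpc\leq a_\eta<\frac{1}{3h}<\frac{1}{h}$, contradiction. Hence $\valx(x)\geq b_\eta>\frac{1}{3h}$ on all of $M$, and dividing $\psi(x)\leq\|\psi\|_\infty$ by $\valx^{1/2}(x)>(3h)^{-1/2}$ yields the first claimed inequality (after absorbing $\sqrt{3}$ into $C$).

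For the second inequality, using $\valx=\sdl(r)$ together with $\sdl'=\cdl$ and the mean value theorem, I obtain $\bigl|\valx-\frac{1}{h}\bigr|=\cdl(\xi)\bigl|r-\sdl^{-1}(1/h)\bigr|$ for some intermediate $\xi$. On the ball in which $\phi(M)$ lies, $\cdl$ admits a positive uniform lower bound ($\cdl\geq\cos(\pi/4)=\sqrt{2}/2$ when $\delta>0$, and $\cdl\geq 1$ when $\delta\leq 0$), so the second estimate follows at once from the first. The delicate point is the dichotomy argument of the second paragraph: without it, the pointwise division by $\valx^{1/2}$ could blow up where $\valx$ is near zero, and the $L^\infty$ bound on $\psi$ alone would not give the right pointwise control on $\bigl|\valx-\frac{1}{h}\bigr|$.
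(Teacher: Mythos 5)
Your argument is correct and matches the paper's proof in substance: the paper likewise analyzes the model function $f(t)=t\left(t-\tfrac{1}{h}\right)^2$ (the square of your $g$), notes from $\nldx\geqslant\tfrac{1}{h^2}$ that some point satisfies $\valx>\tfrac{1}{3h}$, invokes connectedness of $M$ to conclude $\valx>\tfrac{1}{3h}$ everywhere, divides by $\valx^{1/2}$, and then passes to $r$ using $\tfrac{d}{dy}\sdl^{-1}(y)=\tfrac{1}{\sqrt{1-\delta y^2}}$. The only cosmetic differences are that you work with $g=\sqrt{f}$ and bound $\cdl$ from below directly (via $r<\tfrac{\pi}{4\sqrt{\delta}}$ and $\delta\leqslant\nhinf^2$) rather than taking a supremum of $\tfrac{1}{\sqrt{1-\delta y^2}}$ over an explicit interval, but these are interchangeable.
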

\begin{proof} Consider the function $f(t)=t\left(t-\frac{1}{h}\right)^2$ which is increasing on $[0, \frac{1}{3h}]$ and $[\frac{1}{h},+\infty)$ and decreasing on $[\frac{1}{3h},\frac{1}{h}]$. Then $\|\psi\|_{\infty}^2=\|f(\valx)\|_{\infty}\leqslant\frac{(CA^{\alpha})^2}{h^3}\varepsilon^{\frac{1}{2n+1}}$. If $(CA^{\alpha})^2\varepsilon^{\frac{1}{2n+1}}\leqslant\frac{1}{27}$ then $f(\valx)\leqslant\frac{1}{27h^3}<f\left(\frac{1}{3h}\right)$. Now since $\nldx\geqslant\frac{1}{h^2}$ there exists $x_0\in M$ so that $|X_{x_0}|\geqslant\frac{1}{2h}>\frac{1}{3h}$ and by connectedness of $M$, it follows that $\valx>\frac{1}{3h}$ over $M$. Then $\left|\valx-\frac{1}{h}\right|\leqslant\frac{\sqrt{3}CA^{\alpha}}{h}\varepsilon^{\frac{1}{2(2n+1)}}$. Moreover assume that $(CA^{\alpha})^2\varepsilon^{\frac{1}{2n+1}}\leqslant\frac{1}{48}$ in order to have $\frac{\sqrt{\delta}}{h}<1$ for $\delta>0$. We have
$$\left|r-\sdl^{-1}\left(\frac{1}{h} \right) \right|\leqslant\left( \sup_{I}\frac{1}{\sqrt{1-\delta y^2}}\right) \left|\valx-\frac{1}{h}\right|\leqslant\frac{3\sqrt{3}CA^{\alpha}}{h}\varepsilon^{\frac{1}{2(2n+1)}}$$
where $I=\R^+$ for $\delta\leqslant 0$ and $I=[0,\frac{4}{3\sqrt{2\delta}}]$ for $\delta>0$. We obtain the desired result by choosing the new constant $C'=3\sqrt{3}C$.
\end{proof}
\section{Proof of the diffeomorphism}
From now we will need a dependence on the second fundamental form in order to prove the diffeomorphism and the quasi-isometry. 

Let us consider $\begin{matrix}\ & \ & \ & \ & \ \\ \ & \ & \ & \ & \ \\
F & : & M & \longrightarrow & S\left(p,\ray\right)\\
\ \ & \ \ & x &\longmapsto & \exp_p \left( \ray \frac{Y}{|Y|}\right)\end{matrix}$,  where $Y=\exp_p^{-1}(x)$. For more convenience we will put $\varrho=\ray\frac{Y}{|Y|}$.
\begin{lem}\label{estidf1} Let $u\in T_x M$ so that $|u|=1$ and $v=u-\langle u,\gradm r\rangle\gradn r$. We have
$$\frac{1}{h^2 s_{\mu}(r)^2}|v|^2\leqslant|dF_x(u)|^2\leqslant \frac{s_{\mu}\left(\ray\right)^2}{\sdl(r)^2}|v|^2$$
\end{lem}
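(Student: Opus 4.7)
The plan is to exploit that $F$ is radial with respect to $p$: in normal polar coordinates at $p$, writing $x=\exp_p(r\xi)$ for a unit vector $\xi\in T_p N$, the definition of $F$ gives $F(\exp_p(r\xi))=\exp_p(\ray\,\xi)$, so $F$ preserves the direction $\xi$ and only changes the radial distance. Since $u\in T_xM\subset T_xN$ and $\gradm r$ is the tangential projection of $\gradn r$, one has $\langle u,\gradm r\rangle=\langle u,\gradn r\rangle$, so the vector $v=u-\langle u,\gradm r\rangle\gradn r$ appearing in the statement is precisely the component of $u$ orthogonal to $\gradn r$ in $N$, i.e.\ tangent to the geodesic sphere of radius $r$ through $x$. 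The radial component of $u$ lies in the kernel of $dF_x$, so only $v$ contributes to $dF_x(u)$.

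The next step is to identify $v$ and $dF_x(u)$ as values of a single Jacobi field. Lifting a curve $x(s)\subset M$ with $x(0)=x$, $x'(0)=u$ through $\exp_p^{-1}$ to $r(s)\xi(s)$ with $|\xi(s)|=1$, setting $w=\xi'(0)\in T_pN$ (automatically orthogonal to $\xi$), and letting $J_w$ be the Jacobi field along $\gamma(t)=\exp_p(t\xi)$ with $J_w(0)=0$ and $J_w'(0)=w$, the standard identity $d(\exp_p)_{t\xi}(tw)=J_w(t)$ yields
\[
v=J_w(r),\qquad dF_x(u)=J_w(\ray),
\]
so the question reduces to comparing the norm of the \emph{same} Jacobi field at the two times $r$ and $\ray$.

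The last step is Rauch's comparison theorem. Under $\mu\leqslant K^N\leqslant\delta$, any Jacobi field along a geodesic of $N$ with $J(0)=0$ and $J'(0)$ orthogonal to the geodesic satisfies
\[
\sdl(t)|w|\leqslant|J_w(t)|\leqslant s_\mu(t)|w|,
\]
and the radius assumptions of Theorem~\ref{diffeo} (notably $r\leqslant\frac{\pi}{4\sqrt{\delta}}$ and $\ray\leqslant\frac{\pi}{2\sqrt{\delta}}$ when $\delta>0$, the latter because $\sdl(\ray)=1/h\leqslant 1/\sqrt{\delta}$) keep us strictly below the first conjugate radius in both comparison spaces. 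At $t=r$ this yields $|w|\in\bigl[|v|/s_\mu(r),\,|v|/\sdl(r)\bigr]$; at $t=\ray$ it yields $|dF_x(u)|\in\bigl[\sdl(\ray)|w|,\,s_\mu(\ray)|w|\bigr]$. Chaining these and squaring, together with $\sdl(\ray)=1/h$, produces exactly the two-sided bound of the lemma. The main obstacle is really just the Jacobi-field bookkeeping of the previous paragraph, i.e.\ checking that the angular variation $\xi'(0)$ extracted from an $M$-curve furnishes the same initial data $w$ whether one evaluates at distance $r$ or at distance $\ray$; once this is in place, everything reduces to a direct application of Rauch.
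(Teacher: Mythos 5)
Your argument is correct and follows essentially the same route as the paper: both proofs isolate the angular part $v$ of $u$, use the radial isometry property of $\exp_p$ to see that only this part survives, and then invoke the Rauch/Jacobi-field comparison estimates under $\mu\leqslant K^N\leqslant\delta$ to bound the norm. The only cosmetic difference is that the paper first works out $|dF_x(u)|^2=\frac{\ray^2}{r^2}\bigl|d\exp_p|_{\varrho}(d\exp_p^{-1}|_x(v))\bigr|^2$ and applies the comparison estimates separately to $d\exp_p^{-1}|_x$ and to $d\exp_p|_\varrho$, whereas you package the two applications into the norm of a single Jacobi field $J_w$ evaluated at $t=r$ and $t=\ray$; this is an equivalent, slightly more streamlined way of saying the same thing, and your check that both radii stay below the conjugate radius matches the paper's standing hypotheses.
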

\begin{proof} An easy computation shows that
$$d\left(\frac{Y}{|Y|}\right)\left|_x(u)\right.=\frac{1}{r}d\exp_p^{-1}\left|_x(u)\right.-\frac{dr(u)}{r^2}\exp_p^{-1}(x)$$
Then we deduce that
\begin{align*}dF_x(u)&=d\exp_p\left|_{\varrho  }\right.\left( \ray d\left(\frac{Y}{|Y|}\right)\left|_x(u)\right.\right)\\
&=\frac{\ray}{r}d\exp_p \left|_{\varrho  }\right.\left(d\exp_p^{-1}\left|_x(u)\right.\right)\\
&-\frac{\ray dr(u)}{r^2}d\exp_p \left|_{\varrho  }\right.(\exp_p^{-1}(x))\\
&=\frac{\ray}{r}d\exp_p \left|_{\varrho  }\right.\left(d\exp_p^{-1}\left|_x(u)\right.\right)-\frac{\ray dr(u)}{r}\gradn r\left|_{F(x)}\right.
\end{align*}
Now let us compute the norm of $dF_x(u)$. We have
\begin{align*}|dF_x(u)|^2&=\frac{\ray^2}{r^2}\left[ \left|d\exp_p \left|_{\varrho  }\right.\left(d\exp_p^{-1}\left|_x(u)\right.\right)\right|^2\right.\\
&\left. -2\langle d\exp_p \left|_{\varrho  }\right.\left(d\exp_p^{-1}\left|_x(u)\right.\right),\gradn r\rangle_{F(x)}dr(u)+dr(u)^2\right]
\end{align*}
Now since $\exp_p$ is a radial isometry (see for instance \cite{sak}), we have
$$\langle d\exp_p \left|_{\varrho  }\right.\left(d\exp_p^{-1}\left|_x(u)\right.\right),\gradn r\rangle_{F(x)}=\langle d\exp_p^{-1}\left|_x(u)\right.,\frac{Y}{|Y|}\rangle=\langle u, \gradn r\rangle_x$$
and it follows that
\begin{align}\label{normdf}|dF_x(u)|^2=\frac{\ray^2}{r^2}\left[ \left|d\exp_p \left|_{\varrho  }\right.\left(d\exp_p^{-1}\left|_x(u)\right.\right)\right|^2-\langle\gradm r,u\rangle^2\right]\end{align}
Now
\begin{align*}\left|d\exp_p \left|_{\varrho  }\right.\left(d\exp_p^{-1}\left|_x(u)\right.\right)\right|^2&=\left| d\exp_p \left|_{\varrho  }\right.(d\exp_p^{-1}\left|_x(v)\right.)\right.\\
&\left.+\langle u,\gradm r\rangle d\exp_p \left|_{\varrho  }\right.\left( d\exp_p^{-1}\left|_x (\gradn r)\right.\right) \right|^2\end{align*}
where $v=u-\langle u,\gradm r\rangle\gradn r$. Expending this expression we get
\begin{align*}&\left|d\exp_p \left|_{\varrho  }\right.\left(d\exp_p^{-1}\left|_x(u)\right.\right)\right|^2=\\
&\left| d\exp_p \left|_{\varrho  }\right.(d\exp_p^{-1}\left|_x(v)\right.)\right|^2+\langle u,\gradm r\rangle^2\left|d\exp_p \left|_{\varrho  }\right.\left( d\exp_p^{-1}\left|_x (\gradn r)\right.\right)\right|^2\\
&+2\langle u,\gradm r\rangle\langle d\exp_p \left|_{\varrho  }\right.(d\exp_p^{-1}\left|_x(v)\right.),d\exp_p \left|_{\varrho  }\right.\left( d\exp_p^{-1}\left|_x (\gradn r)\right.\right)\rangle\\
&=\left| d\exp_p \left|_{\varrho  }\right.(d\exp_p^{-1}\left|_x(v)\right.)\right|^2+\langle u,\gradm r\rangle^2
\end{align*}
where in the last equality we have used again the radial isometry property of the exponential map. And reporting this in (\ref{normdf}) we obtain
$$|dF_x(u)|^2=\frac{\ray^2}{r^2}\left| d\exp_p \left|_{\varrho  }\right.(d\exp_p^{-1}\left|_x(v)\right.)\right|^2$$
Since $\mu\leqslant K^N\leqslant\delta$ the standard Jacobi field estimates (see for instance corollary 2.8, p 153 of \cite{sak}) say that for any vector $w$ orthogonal to $\gradn r$ at $y$ we have
$$|w|^2\frac{r^2}{\sm(r)^2}\leqslant|d\exp_p^{-1}\left|_y(w)\right.|^2\leqslant|w|^2\frac{r^2}{\sdl(r)^2}$$
This gives
$$\frac{\sdl(\ray)^2}{r^2}|d\exp_p^{-1}\left|_x(v)\right.|^2\leqslant|dF_x(u)|^2\leqslant\frac{\sm(\ray)^2}{r^2}|d\exp_p^{-1}\left|_x(v)\right.|^2$$
and applying again the standard Jacobi field estimates we obtain the desired inequalities of the lemma.
\end{proof}
From now we denote by $D$ any constant of the form $D:=c$ if $\mu\geqslant 0$ and $D:=c\cm\left(\frac{c}{h}\right)$ for some positive constant $c$.
\begin{lem}\label{estidf2} Let $u\in T_x M$ so that $|u|=1$.
$$\frac{1-\|\gradm r\|_{\infty}^2}{h^2\sdl^2(r)}\leqslant |dF_x(u)|^2\leqslant \frac{1}{h^2\sdl^2(r)}\left(1+D\left(\frac{\delta-\mu}{h^2}\right)\right)^2$$
\end{lem}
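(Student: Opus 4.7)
The plan is to deduce Lemma \ref{estidf2} from Lemma \ref{estidf1} by first expanding $|v|^2$ in an exploitable form and then converting the $s_\mu$ comparison functions into $\sdl$ through a Jacobi/Wronskian argument.

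As a preparatory step I would compute $|v|^2$ explicitly. Since $u\in T_xM$ with $|u|=1$ and $\gradm r$ is the tangential part of $\gradn r$ along $M$, one has $\langle u,\gradn r\rangle=\langle u,\gradm r\rangle$, and a direct expansion of $v=u-\langle u,\gradm r\rangle\gradn r$ gives
$$|v|^2=1-\langle u,\gradm r\rangle^2,\qquad\text{hence}\qquad 1-\|\gradm r\|_{\infty}^2\leqslant |v|^2\leqslant 1.$$
This is the only place where the $1-\|\gradm r\|_{\infty}^2$ term of the lower bound enters, and it is the source of the clean upper bound $|v|^2\leqslant 1$.

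For the lower bound I would combine $|v|^2\geqslant 1-\|\gradm r\|_{\infty}^2$ with the left-hand inequality of Lemma \ref{estidf1} to get $|dF_x(u)|^2\geqslant(1-\|\gradm r\|_{\infty}^2)/(h^2 s_\mu(r)^2)$, and then replace $s_\mu(r)$ by $\sdl(r)$ via a Rauch-type comparison between Jacobi fields of curvatures $\mu$ and $\delta$ (see below). For the upper bound I would use $|v|^2\leqslant 1$ in the right-hand inequality of Lemma \ref{estidf1} to obtain $|dF_x(u)|^2\leqslant s_\mu(\ray)^2/\sdl(r)^2$ and reduce matters to establishing
$$s_\mu(\ray)\leqslant \sdl(\ray)\Bigl(1+D\,\tfrac{\delta-\mu}{h^2}\Bigr)=\tfrac{1}{h}\Bigl(1+D\,\tfrac{\delta-\mu}{h^2}\Bigr),$$
since squaring then produces the desired form.

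The pivotal input for both bounds is the Wronskian identity obtained from $s_\mu''+\mu s_\mu=0$ and $\sdl''+\delta\sdl=0$, namely
$$\bigl(s_\mu\sdl' - s_\mu'\sdl\bigr)'=(\mu-\delta)s_\mu\sdl,$$
from which $(s_\mu/\sdl)'(t)=(\delta-\mu)\sdl(t)^{-2}\int_0^t s_\mu(\tau)\sdl(\tau)\,d\tau$. Integrating from $0$ to $\ray$ gives
$$\frac{s_\mu(\ray)}{\sdl(\ray)}-1\leqslant (\delta-\mu)\int_0^{\ray}\frac{1}{\sdl(t)^2}\int_0^t s_\mu(\tau)\sdl(\tau)\,d\tau\,dt.$$
Since $\sdl(\ray)=1/h$ and, by \eqref{delta/hc}, $\delta/h^2\leqslant 1$, the point $\ray$ is controlled by $1/h$, and a direct estimation of the double integral produces a factor of order $1/h^2$.

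The main obstacle is the analytic bookkeeping in this double-integral bound, especially in the case $\mu<0$. When $\mu\geqslant 0$, the functions $s_\mu$, $\sdl$ and $\cdl$ are all bounded by universal constants on $[0,\ray]$, so one obtains $D$ equal to a pure numerical constant. When $\mu<0$, the hyperbolic growth of $s_\mu$ on $[0,\ray]$ must be absorbed without wasting powers of $h$ in the leading term, and a routine estimation produces exactly the factor $\cm(c/h)=\cosh(c\sqrt{|\mu|}/h)$ appearing in the definition of $D$. The analogous (in fact easier) comparison $s_\mu(r)\leqslant \sdl(r)(1+O((\delta-\mu)/h^2))$ on the smaller range covered by $\phi(M)$ then gives the lower bound in the stated form, completing the proof.
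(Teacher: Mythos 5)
Your overall strategy --- reduce to Lemma \ref{estidf1} via the identity $|v|^2 = 1-\langle u,\gradm r\rangle^2$ and then compare $s_\mu$ with $\sdl$ --- is exactly the paper's architecture, but the engine you use to compare the two functions is genuinely different. The paper fixes the radius $r$ and differentiates $\sigma_r(t)=s_t(r)$ with respect to the \emph{curvature parameter} $t$: the explicit formula for $\sigma_r'$ shows $\sigma_r$ is $C^1$, decreasing, and satisfies $|\sigma_r'(t)|\leqslant Er^3 c_t(r)$, so the mean value theorem gives the single clean inequality (\ref{compars}), $0\leqslant s_\mu(r)-\sdl(r)\leqslant Er^3 c_\mu(r)(\delta-\mu)$, from which (\ref{smusdel}) follows by evaluating at $\ray$ and using $\ray\leqslant c/h$ together with Lemma \ref{entre2sphere}. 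Your proposal instead integrates the Wronskian of the two solutions $s_\mu,\sdl$ of $y''+\mu y=0$ and $y''+\delta y=0$, which yields the exact representation $(s_\mu/\sdl)'(t)=(\delta-\mu)\sdl(t)^{-2}\int_0^t s_\mu\sdl$. That identity is correct and would indeed lead to (\ref{smusdel}), but it leaves a double integral whose estimation you yourself flag as the main obstacle, and the $\cm\left(\frac{c}{h}\right)$ factor in $D$ then has to be extracted from it by hand. The paper's device is engineered precisely to sidestep that bookkeeping, since the $r^3 c_t(r)$ prefactor comes out in closed form. Both routes are valid; yours is a recognisable alternative, only somewhat less economical.

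One caveat about your final sentence. You assert that the comparison $s_\mu(r)\leqslant\sdl(r)(1+O((\delta-\mu)/h^2))$ ``gives the lower bound in the stated form.'' It does not, quite. Since $s_\mu(r)\geqslant\sdl(r)$, replacing $s_\mu$ by $\sdl$ in the denominator of the left-hand inequality of Lemma \ref{estidf1} goes the wrong way; what your comparison actually yields is
$$|dF_x(u)|^2\geqslant\frac{1-\|\gradm r\|_\infty^2}{h^2\sdl^2(r)\left(1+D\frac{\delta-\mu}{h^2}\right)^2},$$
with a residual correction factor mirroring the one already present in the upper bound. The printed lemma shares this imprecision (the paper too merely cites (\ref{compars}), (\ref{smusdel}) and Lemma \ref{estidf1} and then asserts ``the desired result''), and the correction is harmless downstream because the hypothesis on the radius of the ball in Theorem \ref{diffeo}, via (\ref{boundradius2}), forces $\frac{\delta-\mu}{h^2}\leqslant\varepsilon$. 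But you should state the lower bound with that correction rather than claim it holds exactly as written.
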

\begin{proof} Let $r\geqslant 0$. For $t\in (-\infty,\frac{\pi^2}{9r^2}]$, consider the function $\sigma_r(t)=s_{t}(r)$. An easy check yields that $\sigma_r$ is $C^1$ on $(-\infty,\frac{\pi^2}{9r^2}]$ and
\[ \sigma_r'(t)=\left\{ \begin{array}{lll}
\frac{r^3c_t(r)}{2}\left(\frac{\sqrt{t}r-\tan(\sqrt{t}r)}{(\sqrt{t}r)^3}\right) &\; \text{if}\; t\in(0,\frac{\pi^2}{9r^2}]\\

-\frac{r^3}{6}&\; \text{if} \;\; t=0 \\

\frac{r^3c_t(r)}{2}\left(\frac{-\sqrt{-t}r+\tanh(\sqrt{-t}r)}{(\sqrt{-t}r)^3}\right) &\; \text{if}\; t\in(-\infty,0) 

\end{array} \right.\]
It follows that $\sigma_r$ is decreasing on $(-\infty,\frac{\pi^2}{9r^2}]$ and that there exists a constant $E$ so that $|\sigma_r'(t)|\leqslant E r^3 c_t(r)$, for any $t\in(-\infty,\frac{\pi^2}{9r^2}]$. From this we deduce that 
\begin{align}\label{compars}0\leqslant \sm(r)-\sdl(r)\leqslant E r^3 \cm(r) (\delta-\mu)\end{align}
From the lemma \ref{entre2sphere} and from the fact that $\frac{1}{h}\leqslant\sdl^{-1}\left(\frac{1}{h}\right)$ and $\phi(M)\subset B\left(p,\frac{\pi}{4\sqrt{\delta}}\right)$ for $\delta>0$, we see that $\sdl^{-1}\left(\frac{1}{h}\right)\leqslant\frac{\pi}{3\sqrt{\delta}}$. Then applying (\ref{compars}) to $\sdl^{-1}\left(\frac{1}{h}\right)$ we obtain that
\begin{align}\label{smusdel}\sm\left(\sdl^{-1}\left(\frac{1}{h}\right)\right)\leqslant\frac{1}{h}\left(1+D\left(\frac{\delta-\mu}{h^2}\right)\right)
\end{align}
From (\ref{compars}), (\ref{smusdel}) and the lemma \ref{estidf1} we get the desired result.
\end{proof}
\begin{lem}\label{norminfxt} Let \ \ $\varepsilon<1$\ ,\ \  $q>n$ \ and\  $A$\ \ be\ \  positive\ \  real\ \  numbers. Then\ \  there \ \ \ e\-xist\  \ \ constants \ \ \ \ $C:=C(q,n)$\ \ ,\  \ \ $\alpha:=\alpha(q,n)$\ \ \ and\ \ \ $\beta:=\beta(q,n)$ \ \ \ so\ \ \ that\ \ \ if\ \ \ $\max(\Vol^{1/n}\nhinf,\Vol^{1/n}\|B\|_q)\leqslant A$ \ \  for\ \ $\delta\geqslant 0$ (resp. $\max(\Vol^{1/n}\nhinf,\frac{\nhinf}{h},\Vol^{1/n}\|B\|_q)\leqslant A$ \ \ for $\delta<0$), $\varepsilon^{\beta}\leqslant\frac{1}{CA^{\alpha}}$ and $\pinch$ holds then
$$\|\xt\|_{\infty}\leqslant \frac{CA^{\alpha}}{h}D^{\alpha}\varepsilon^{\beta}$$
\end{lem}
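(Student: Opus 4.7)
The approach is a Moser iteration applied to $\xi:=|X^T|$, starting from the $L^2$ estimate $\nldxt\leqslant 2\varepsilon/\nhinf^2$ of Lemma \ref{normztangent} and using the Hoffman--Spruck Sobolev inequality (\ref{simon}) exactly in the spirit of Lemmas \ref{majzinf} and \ref{ninff}. The new difficulty is that the second fundamental form $B$ is only controlled in $L^q$ with $q>n$, so H\"older's inequality must be inserted at each step. Two preparatory bounds set the stage: Lemma \ref{entre2sphere} together with the comparison (\ref{smusdel}) gives $\ninfx\leqslant CD/h$ (with $D=C$ when $\mu\geqslant 0$) under the smallness hypothesis on $\varepsilon$, while (\ref{delta/hc}) for $\delta\geqslant 0$ and the hypothesis $\nhinf/h\leqslant A$ for $\delta<0$ ensure that $\nhinf$ is comparable to $h$ in the directions used by the iteration. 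In particular both cases yield $\|\xi\|_2\leqslant C\sqrt{\varepsilon}/h$.

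First I would derive a pointwise bound for the gradient of $\xi^{2\alpha}$. Writing $X^T=X-\langle X,\nu\rangle\nu$ and differentiating along a tangent vector $e_i\in T_xM$, the Weingarten formula $\nabla^N_{e_i}\nu=-B(e_i,\cdot)^{\sharp}$ combined with the standard Hessian comparison for the distance function $r$ under the bound $\mu\leqslant K^N\leqslant\delta$ yields
$$
|d\xi^{2\alpha}|\leqslant 2\alpha\,\xi^{2\alpha-1}\bigl(D\,\cdl(r)+\valx\,|B|\bigr),
$$
with an additional multiplicative factor $(1+\nhinf\ninfx)$ when $\delta<0$, exactly as in Lemma \ref{majzinf}.

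Next I apply (\ref{simon}) with $f=\xi^{2\alpha}$. The $|B|$ contribution is treated by H\"older's inequality with conjugate exponents $q$ and $\sigma:=q/(q-1)$, which together with the preparatory bounds and $\tau:=n/(n-1)$ produces a recursion of the form
$$
\|\xi\|_{2\alpha\tau}^{2\alpha}\leqslant K(n)\,A\,D\,\alpha\,\bigl(\|\xi\|_{2\alpha-1}^{2\alpha-1}+\|\xi\|_{(2\alpha-1)\sigma}^{2\alpha-1}\bigr)+K(n)\,\nhinf\,\Vol^{1/n}\,\|\xi\|_{2\alpha}^{2\alpha}.
$$
The hypothesis $q>n$ is precisely equivalent to $\tau>\sigma$, so composing Sobolev with H\"older still yields a strict geometric gain of Lebesgue exponent at each step. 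Choosing an iteration scheme $a_{p+1}=(a_p/\sigma+1)\tau$ with $a_0=2$, absorbing the $\nhinf\,\|\xi\|_{2\alpha}^{2\alpha}$ term by interpolation between $\|\xi\|_{2\alpha\tau}$ and $\|\xi\|_{(2\alpha-1)\sigma}$, and running the standard computation as in Lemmas \ref{majzinf} and \ref{ninff}, one reaches --- after inserting $\|\xi\|_2\leqslant C\sqrt{\varepsilon}/h$ and using the smallness condition $\varepsilon^{\beta}\leqslant 1/(CA^\alpha)$ to absorb residual factors --- the stated estimate
$$
\|X^T\|_\infty\leqslant\frac{CA^\alpha}{h}\,D^\alpha\,\varepsilon^\beta.
$$

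The main obstacle is precisely the H\"older loss arising from using $\|B\|_q$ instead of $\nbinf$: one must verify that the sequence $(a_p)$ still diverges under the geometric ratio $\tau/\sigma$ rather than $\tau$ and that the associated infinite product of Sobolev constants $\prod_p(K(n)\,a_p\,A\,D)^{(\sigma/\tau)^p}$ remains finite --- both of which require exactly $q>n$. This is the source of the dependence of $\alpha$ and $\beta$ on $(q,n)$, while the factor $D^\alpha$ in the final estimate reflects the lower curvature bound $\mu$ entering through $\ninfx$ and the Jacobi-field comparison used to bound it.
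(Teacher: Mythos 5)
Your outline follows the same overall strategy as the paper: a Nirenberg--Moser iteration applied to $|X^T|$, based on the Hoffman--Spruck Sobolev inequality \eqref{simon} together with H\"older's inequality with exponents $q$ and $q/(q-1)$ to process the $\|B\|_q$ contribution, with $q>n$ guaranteeing a genuine geometric gain of Lebesgue exponent. However, the crucial step --- the pointwise bound on $|d\,|\xt|^{2\alpha}|$ --- is merely asserted, and this is precisely where the real content of the lemma lies. Writing $|\xt|=\sdl(r)|\gradm r|$ and differentiating, the Hessian term $\gradn dr(\cdot,\nu)$ enters together with a $1/|\gradm r|$ factor, which is singular where $\xt$ vanishes. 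The paper handles this by paying an extra power of $|\xt|$ (via $\sdl\leqslant\ninfx$), arriving at a bound of the form $|d\,|\xt|^{2\alpha}|\leqslant C(n)\alpha\,\ninfx\,(E+\ninfx|B|)\,|\xt|^{2\alpha-2}$ with exponent $2\alpha-2$, and correspondingly runs the iteration with $a_{p+1}=\tfrac{n(q-1)}{(n-1)q}a_p+\tfrac{2n}{n-1}$ and $\alpha=\tfrac{q-1}{2q}a_p+1$ so that $(2\alpha-2)\tfrac{q}{q-1}=a_p$. Your sharper bound with exponent $2\alpha-1$ and the scheme $a_{p+1}=(a_p/\sigma+1)\tau$ is actually attainable, but only after noticing that $\gradn r$ lies in the kernel of $\gradn dr$, so that the relevant term satisfies $|\gradn dr(\cdot,\nu)|\leqslant\tfrac{\cmu}{\smu}\,|\nu-\langle\nu,\gradn r\rangle\gradn r|=\tfrac{\cmu}{\smu}|\gradm r|$ and the singularity cancels; you should state this observation explicitly, since without it the asserted estimate is not obvious and it is not the route the paper takes. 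Likewise, the absorption of the $\nhinf\||\xt|\|_{2\alpha}^{2\alpha}$ term ``by interpolation'' is vague --- the cleaner way, used implicitly in the paper, is to write $|\xt|^{2\alpha}\leqslant\ninfx^2|\xt|^{2\alpha-2}$ (or $\ninfx|\xt|^{2\alpha-1}$ in your scheme) and fold it into the other terms using $\Vol^{1/n}\nhinf\leqslant A$ and $\ninfx\leqslant C A^{n/2}/h$. Apart from these gaps the iteration mechanics, the role of $q>n$, and the appearance of the factor $D^{\alpha}$ through $\cm(r)$ are correctly identified.
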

\begin{proof} Put $\chi=|\xt|$. Then $|d\chi^{2\alpha}|=2\alpha\chi^{2\alpha-1}(\cdl(r)|\gradm r|^2+\sdl(r)|d|\gradm r||)$. Let us estimate $|d|\gradm r||$ at a point $x$. For this, consider $(e_i)_{1\leqslant i\leqslant n}$ an orthonormal basis at $x$. We have
\begin{align*}|d|\gradm r||^2&=\frac{1}{4|\gradm r|^2}|d\langle\gradn r,\nu\rangle^2|^2\\
&=\frac{\langle\gradn r,\nu\rangle^2}{|\gradm r|^2}\sum_{i=1}^n(e_i\langle\gradn r,\nu\rangle)^2\\
&=\frac{\langle\gradn r,\nu\rangle^2}{|\gradm r|^2}\sum_{i=1}^n(\gradn dr(e_i,\nu)+B(e_i,\gradm r))^2\\
&\leqslant\frac{2}{|\gradm r|^2}\left(\sum_{i=1}^n\gradn dr(e_i,\nu)^2+|B|^2|\gradm r|^2\right)
\end{align*}
Now $\displaystyle\sum_{i=1}^n\gradn dr(e_i,\nu)^2\leqslant|\gradn dr|^2\leqslant\sum_{i=1}^{n+1}\gradn dr(u_i,u_i)$ where $(u_i)_{1\leqslant i\leqslant n+1}$ is an orthonormal basis which diagonalizes $\gradn dr$. From the comparison theorems (see for instance \cite{sak} p 153) we deduce that
\begin{align*}\sum_{i=1}^{n+1}\gradn dr(u_i,u_i)^2&\leqslant\left(\frac{c_{\mu}}{s_{\mu}}\right)^2\sum_{i=1}^{n+1}|u_i-\langle u_i,\gradn r\rangle\gradn r|^2=n\left(\frac{c_{\mu}}{s_{\mu}}\right)^2
\end{align*}
It follows that $|d|\gradm r||^2\leqslant\displaystyle\frac{2n}{|\gradm r|^2}\left(\frac{c_{\mu}}{s_{\mu}}\right)^2+2|B|^2$ and
$$|d\chi^{2\alpha}|\leqslant2\alpha\chi^{2\alpha-1}C(n)\left(\cdl+\frac{\sdl}{|\gradm r|}\left(\frac{c_{\mu}}{s_{\mu}}\right)+\sdl |B|\right)$$
Now it is easy to see that $\frac{\sdl}{s_{\mu}}$ is bounded by a constant. Then
\begin{align*}|d\chi^{2\alpha}|&\leqslant2\alpha\chi^{2\alpha-1}C(n)\left(\frac{\cdl+\cmu}{|\gradm r|}+\ninfx|B|\right)\\
&\leqslant2\alpha\chi^{2\alpha-1}C(n)\ninfx\left(\frac{\cdl+\cmu}{\chi}+|B|\right)\\
&\leqslant2\alpha\chi^{2\alpha-2}C(n)\ninfx(\cdl+\cmu+\|\chi\|_{\infty}|B|)\\
&\leqslant2\alpha C(n)\ninfx(E+\ninfx|B|)\chi^{2\alpha-2}
\end{align*}
where $E=1$ if $\mu\geqslant 0$ and $E=\cmu(r)$ if not. Now let us assume that $\alpha\geqslant 1$. Then
\begin{align*}\|\chi\|_{\frac{2\alpha n}{n-1}}^{2\alpha}&\leqslant K(n)\Vol^{1/n}2\alpha(\ninfx E\|\chi\|_{2\alpha-2}^{2\alpha-2}+\ninfx^2\|B\|_q\|\chi\|_{\frac{(2\alpha-2)q}{q-1}}^{2\alpha-2})\\
&\leqslant K(n)A2\alpha\left(\frac{\ninfx}{h}E+\ninfx^2\right)\|\chi\|_{\frac{(2\alpha-2)q}{q-1}}^{2\alpha-2}
\end{align*}
Now we put $\nu:=\frac{n(q-1)}{(n-1)q}$, $a_{p+1}:=a_p\nu+\frac{2n}{n-1}$ , $a_0=2$ and $\alpha:=\frac{1}{2}\left(\frac{q-1}{q}\right)a_p+1$. Then $a_{p+1}=\frac{2\alpha n}{n-1}$ and
\begin{align*}\|\chi\|_{a_{p+1}}^{\frac{a_{p+1}}{\nu^{p+1}}}&\leqslant\left(K(n)Aa_{p+1}\left(\frac{\ninfx}{h}E+\ninfx^2\right)\right)^{\frac{n}{n-1}\frac{1}{\nu^{p+1}}}\|\chi\|_{a_p}^{\frac{a_p}{\nu^p}}\\
&\leqslant\left(\prod_{i=1}^{p+1}a_i^{\frac{1}{\nu^i}}\right)^{\frac{n}{n-1}}\left(K(n)A\left(\frac{\ninfx}{h}E+\ninfx^2\right)\right)^{\frac{n}{n-1}\displaystyle\sum_{i=1}^{p+1}\frac{1}{\nu^i}}\|\chi\|_{a_0}^{a_0}
\end{align*}
Now noting that $\frac{a_p}{\nu^p}$ converges to $a_0+\frac{2nq}{q-n}$ we get
$$\|\chi\|_{\infty}\leqslant C(n,q)\left(A\left(\frac{\ninfx}{h}E+\ninfx^2\right)\right)^{\frac{\gamma}{2(1+\gamma)}}\|\chi\|_2^{\frac{1}{1+\gamma}}$$
where $\gamma:=\frac{nq}{q-n}$. Now combining Lemmas \ref{majzinf} and \ref{normztangent} we obtain that
$$\|\chi\|_{\infty}\leqslant C(q,n)\frac{A^{\alpha(q,n)}(E+1)^{\alpha(q,n)}}{h}\varepsilon^{\beta(q,n)}$$
Now we conclude the proof by noting that if $\mu<0$, then from Lemma \ref{entre2sphere} we have
\begin{align*}E=\cm(r)&\leqslant\cosh\left(\sqrt{\mu}\left(\sdl^{-1}\left(\frac{1}{h}\right)+\frac{CA^{\alpha}}{h}\varepsilon^{\frac{1}{2(2n+1)}}\right)\right)\\
&\leqslant\cosh\left(\sqrt{\mu}\left(\sdl^{-1}\left(\frac{1}{h}\right)+\frac{1}{4h}\right)\right)\\
&\leqslant\cm\left(\frac{c}{h}\right)
\end{align*}
\end{proof}
We can now give the proof of Theorem \ref{diffeo}.

\begin{prooftheodiffeo} From the lemma \ref{entre2sphere} we have
\begin{align*} \frac{1}{h^2\sdl^2(r)}\left(1+D\left(\frac{\delta-\mu}{h^2}\right)\right)^2-1&\leqslant\frac{\left(1+D\left(\frac{\delta-\mu}{h^2}\right)\right)^2}{(1-CA^{\alpha}\varepsilon^{\frac{1}{2(2n+1)}})^2}-1\\
&\hspace{-4cm}\leqslant\frac{4}{3}\left(2+D\left(\frac{\delta-\mu}{h^2}\right)-CA^{\alpha}\varepsilon^{\frac{1}{2(2n+1)}}\right)\left(D\left(\frac{\delta-\mu}{h^2}\right)+CA^{\alpha}\varepsilon^{\frac{1}{2(2n+1)}}\right)\\
&\hspace{-4cm}\leqslant DCA^{\alpha}\left(1+\frac{\delta-\mu}{\varepsilon h^2}\right)^2\varepsilon^{\frac{1}{2(2n+1)}}
\end{align*} 
On the other hand from Lemmas \ref{majzinf} and \ref{norminfxt} we have
\begin{align*}\|\gradm r\|_{\infty}^2\leqslant\frac{h^2}{(1-CA^{\alpha}\varepsilon^{\frac{1}{2(2n+1)}})^2}\|\xt\|_{\infty}^2\leqslant\frac{16}{9}(CA^{\alpha})^2D^{2\alpha}\varepsilon^{2\beta}
\end{align*}
Then we deduce that
\begin{align*}
\frac{1-\|\gradm r\|_{\infty}^2}{h^2\sdl^2(r)}-1&\geqslant-\frac{2(CA^{\alpha})^2D^{2\alpha}\varepsilon^{2\beta}+(1+CA^{\alpha}\varepsilon^{\frac{1}{2(2n+1)}})^2-1}{(1+CA^{\alpha}\varepsilon^{\frac{1}{2(2n+1)}})^2}\\
&\geqslant-\left(2(CA^{\alpha})^2D^{2\alpha}\varepsilon^{2\beta}+2CA^{\alpha}\varepsilon^{\frac{1}{2(2n+1)}}+(CA^{\alpha})^2\varepsilon^{\frac{1}{2n+1}}\right)\\
&\geqslant-CA^{\alpha'}(D^{2\alpha}+1)\varepsilon^{\beta'}
\end{align*}
It follows from this, the previous inequality and Lemma \ref{estidf2} that
\begin{align*}||dF_x(u)|^2-1|&\leqslant CA^{\alpha'}\max\left((D^{2\alpha}+1)\varepsilon^{\beta'},\left(1+\frac{\delta-\mu}{\varepsilon h^2}\right)^2\varepsilon^{\frac{1}{2(2n+1)}}\right)\\
&\leqslant CA^{\alpha'}D^{\alpha'}\varepsilon^{\beta''}\left(1+\frac{\delta-\mu}{\varepsilon h^2}\right)^2
\end{align*}
From (\ref{boundradius2}) we deduce that if $\phi(M)$ lies in a ball of center $p$ and radius $\sdl^{-1}\left(\sqrt{\frac{\varepsilon}{\delta-\mu}}\right)$ then $||dF_x(u)|^2-1|\leqslant CA^{\alpha'}D^{\alpha'}\varepsilon^{\beta''}$. Moreover it is easy to see that for $\delta\geqslant 0$ then $D\leqslant c\cosh c$. For $\delta<0$, we conclude by noting that $\left(\sqrt{|\mu|}\frac{c}{h}\right)^2\leqslant c^2\left(\frac{\delta-\mu}{\varepsilon h^2}\right)+c^2\frac{|\delta|}{h^2}\leqslant c^2+c^2\frac{|\delta|}{\nhinf^2}A^2\leqslant c^2\left(1+A^2\right)$.
\end{prooftheodiffeo}

\begin{prooflambda} Since $\phi(M)$ lies in a convex ball of radius $\min\left(\frac{\pi}{8\sqrt{\delta}},\frac{i(N)}{2}\right)$, we deduce that $\phi(M)\subset B(p_0,\min\left(\frac{\pi}{4\sqrt{\delta}},i(N)\right)$ where $p_0$ is the center of mass of $M$. From Proposition \ref{IimpliqueL} and the fact that fact that $\varepsilon<1/6$ we know that $(I_{p_0,6\varepsilon})$ holds. Then we can apply Theorem \ref{diffeo}.
\end{prooflambda}
\section{Application to the stability}
Briefly, we recall the problem of the stability of hypersurfaces with constant mean curvature (see for instance \cite{bardoc}).

Let $\var$ be an oriented compact $n$ -dimensional hypersurface isometrically immersed by $\phi$ in a $n+1$-dimensional oriented manifold $\amb$. We assume that $M$ is oriented by the global unit normal field $\nu$ so that $\nu$ is compatible with the orientations of $M$ and $N$. Let $F : (-\varepsilon,\varepsilon)\times M\longrightarrow N$ be a variation of $\phi$ so that $F(0,.)=\phi$. We recall that the balance volume is the function $V : (-\varepsilon,\varepsilon)\longrightarrow\R$ defined by
$$\int_{[0,t]\times M} F^{\star}\volh$$
where $\volh$ is the element volume associated to the metric $h$. It is well known that
$$V'(0)=\insm f\vol$$
where $f(x)=\langle \frac{\partial F}{\partial t}(0,x),\nu\rangle$. Moreover the area function $A(t)=\insm dv_{F_t^{\star}h}$ satisfies
$$A'(0)=-n\insm Hf\vol$$
The balance volume $V$ is said to be preserving volume if $V(t)=V(0)$ in a neighborhood of $0$ ; in this case we have $\insm f\vol=0$. Conversely, for all smooth function $f$ so that $\insm f\vol=0$, there exists a preserving volume variation so that $f=\langle \frac{\partial F}{\partial t}(0,x),\nu\rangle$. The following assertions are equivalent
\begin{enumerate}
\item The immersion $\phi$ is a critical point of the area (i.e. $A'(0)=0$ ) for all variation with preserving volume.
\item $\insm Hf\vol=0$ for any smooth function so that $\insm f\vol=0$.
\item There exists a constant $H_0$ so that $A'(0)+nH_0 V'(0)=0$ for any variation.
\item $\phi$ is of constant mean curvature $H_0$.
\end{enumerate}
An immersion with constant mean curvature $H_0$ will be said stable if $A''(0)\geqslant 0$ for all preserving volume variation. Now we consider the function $J(t)$ defined by 
$$J(t)=A(t)+nH_0 V(t)$$
Then $J''(0)$ is depending only on $f$ and we have
$$J''(0)=\insm|df|^2\vol-\insm(Ric^N(\nu,\nu)+|B|^2)f^2\vol$$
where $Ric^N$ is the Ricci curvature of $N$ with respect to the metric $h$. It is known that $\phi$ is a stable constant mean curvature immersion if and only if $J''(0)\geqslant 0$ for any smooth function so that $\insm f\vol=0$.
\begin{rema} Note that the problem which we consider is more general that the isoperimetric problem since the hypersurfaces which we consider are immersed and not necessarily embedded.
\end{rema}

Now let us give a proof of Theorem \ref{stab}.

\begin{proofstab}  Let $f$ be the first eigenfunction associated to $\lap$. Since $\insm f\vol=0$ then $J''(0)\geqslant 0$ and
$$\lap\insm f^2\vol-\insm (Ric^N(\nu,\nu)+nH^2+|\tau|^2)f^2\vol\geqslant 0$$
where $\tau$ is the umbilicity tensor (i.e. $\tau=nHg-B$). Since $\mu\leqslant K^N\leqslant\delta$, we deduce that
$$n(H^2+\mu)\leqslant\lap\leqslant n(H^2+\delta)$$
In other words, we have the pinching condition
$$nh^2\leqslant\lap\left(1+\frac{1}{\frac{h^2}{\delta-\mu}-1}\right)$$
Let $\varepsilon<1/6$. If $\phi(M)$ lies in a ball of radius $R_1:=\sdl^{-1}\left(\sqrt{\frac{\varepsilon}{2(\delta-\mu)}}\right)$ then $(\Lambda_{\varepsilon})$ is satisfied. Let $p_0$ be the center of mass of $M$. Let $R_2:=\sdl^{-1}\left(\sqrt{\frac{\varepsilon}{\delta-\mu}}\right)$ and $R_3:=\frac{1}{2}\sdl^{-1}\left(\sqrt{\frac{\varepsilon}{2(\delta-\mu)}}\right)$. Then $R_3\leqslant\min\left(R_1,\frac{1}{2}R_2\right)$. Therefore if $\phi(M)$ lies in a convex ball of radius $R_3$ we deduce that this ball contains $p_0$ and $\phi(M)\subset B(p_0,R_2)$. Moreover $(\Lambda_{\varepsilon})$ is satisfied and we conclude with Corollary \ref{lambda}.
\end{proofstab}
\section{Application to the almost umbilic hypersurfaces}
Theorems \ref{theo14} and \ref{15} are obtained by combining Theorem \ref{diffeo} and results of \cite{aubgrorot} for the Euclidean case with an eigenvalue  pinching theorem in almost positive Ricci curvature due to Aubry (\cite{aub}).
In the following theorem we denote $\underline{\Ric}(x)$ the lowest eigenvalue of the Ricci tensor $\Ric(x)$ at $x\in M$. Moreover for any function $f$, we put $f_{-}=\min(-f,0)$.
\begin{theo}({\sc Aubry})\label{Aubry} Let $\var$ be a complete $n$-dimensional Riemannian manifold and $r>n$. If $M$ has finite volume and 
$$\rho_r=\frac{1}{k\Vol^{2/r}}\left( \insm\left(\underline{\Ric}-(n-1)k\right)_{-}^{r/2}\vol\right) ^{2/r}\leqslant C(r,n)^{-2/r}$$
then $M$ is compact and $\lap\geqslant nk(1-C(r,n)\rho_r)$.
\end{theo}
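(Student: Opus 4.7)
The plan is to follow the standard Bochner strategy behind Lichnerowicz's theorem, but keep track of a defect term controlled by $\rho_r$. First, let $f$ be a first eigenfunction of the Laplacian, $\Delta f=\lap f$. Applying the Bochner formula and integrating over $M$ gives
\begin{align*}
\insm|\nabla^2 f|^2\vol+\insm\Ric(\nabla f,\nabla f)\vol=\lap\insm|\nabla f|^2\vol,
\end{align*}
and combining with the Newton inequality $|\nabla^2 f|^2\geqslant (\Delta f)^2/n$ as well as the identity $\insm|\nabla f|^2\vol=\lap\insm f^2\vol$, one obtains the Lichnerowicz-type relation
\begin{align*}
\frac{n-1}{n}\lap^2\insm f^2\vol\geqslant\insm\Ric(\nabla f,\nabla f)\vol\geqslant(n-1)k\lap\insm f^2\vol-\insm\rho\,|\nabla f|^2\vol,
\end{align*}
where $\rho:=(\underline{\Ric}-(n-1)k)_{-}$ is the integrand defining $\rho_r$. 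The whole game is then to absorb the defect $\insm\rho\,|\nabla f|^2\vol$.

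The plan for this absorption is to combine H\"older's inequality and a Moser-type iteration. By H\"older, $\insm\rho|\nabla f|^2\vol\leqslant \Vol\,\|\rho\|_{r/2}\,\bigl\||\nabla f|^2\bigr\|_{r/(r-2)}$, so the task becomes an $L^{r/(r-2)}$-bound on $|\nabla f|^2$ in terms of $\lap\|f\|_2^2$. To get this, test the Bochner identity against powers of $|\nabla f|$, namely multiply $\frac12\Delta|\nabla f|^2=|\nabla^2 f|^2+\Ric(\nabla f,\nabla f)-\lap|\nabla f|^2$ by $|\nabla f|^{2\alpha-2}$ and integrate by parts; this produces a gradient term of the form $\insm|\nabla(|\nabla f|^\alpha)|^2\vol$ that feeds into a Sobolev inequality (which is available since $r>n$ provides a substitute for the usual Ricci lower bound, via a Gallot-type Sobolev inequality under integral Ricci assumptions). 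Iterating the resulting reverse-H\"older inequality bootstraps $\||\nabla f|^2\|_{r/(r-2)}$ to an $L^\infty$-bound, yielding $\||\nabla f|^2\|_{r/(r-2)}\leqslant K(r,n)\lap\|f\|_2^2$ up to a universal constant depending only on $r$ and $n$, provided $\rho_r$ is small enough to control the Sobolev constant.

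Plugging this back into the Bochner inequality gives
\begin{align*}
\frac{n-1}{n}\lap\geqslant (n-1)k-K(r,n)k\rho_r\cdot\frac{\lap}{k},
\end{align*}
which rearranges to $\lap\geqslant nk(1-C(r,n)\rho_r)$ after collecting constants. Compactness of $M$ follows by a parallel Myers-type argument under integral Ricci bounds: since $\rho_r$ is small, one obtains a uniform upper bound on the diameter by applying the same Moser-type iteration to the equation satisfied by the index form of minimizing geodesics (a Petersen--Wei style integral Myers theorem), which together with completeness and finite volume forces compactness.

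The main obstacle is the Moser iteration step, since the Sobolev inequality used there is not the naive one: under a mere integral Ricci hypothesis the Sobolev constant itself depends on $\rho_r$, so one must carefully track how $\rho_r$ enters every iteration and ensure that the smallness assumption $\rho_r\leqslant C(r,n)^{-2/r}$ precisely suffices to close the bootstrap. Once the iteration is stabilised, the rest of the argument is essentially algebraic manipulation of the Bochner identity.
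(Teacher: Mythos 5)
This theorem is quoted from Aubry's paper \cite{aub} and is used as a black box; the paper contains no proof of it, so there is no ``paper's proof'' to compare against. Evaluating your sketch on its own terms: the opening Bochner/Lichnerowicz step with the defect term $\rho=(\underline{\Ric}-(n-1)k)_{-}$ is correct and standard, and the H\"older reduction to a reverse-H\"older bound on $|\nabla f|^2$ identifies the right quantity to control. But the two steps you treat as routine are exactly the ones that constitute Aubry's contribution, and as written they contain genuine gaps.

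First, the Moser iteration you describe needs a Sobolev inequality. The Gallot/Petersen--Wei Sobolev inequalities under integral Ricci hypotheses are \emph{local}: they hold on balls of a prescribed radius and their constants depend on that radius (equivalently, on an a priori diameter or volume-doubling bound). Here there is no a priori diameter bound at all --- compactness is part of the \emph{conclusion}. So you cannot invoke ``a Gallot-type Sobolev inequality under integral Ricci assumptions'' directly; one must first establish enough geometric control to make the iteration run, and the smallness of $\rho_r$ by itself does not hand you this. Second, the compactness statement you dismiss as ``a Petersen--Wei style integral Myers theorem'' runs into the same circularity: the known integral-Ricci Myers theorems either presuppose a diameter/volume bound on balls, or require the integral to be taken on balls of controlled radius, neither of which is available here. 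Breaking this circularity --- obtaining a diameter bound from finite volume plus a small \emph{global} normalized integral curvature defect, on a possibly noncompact manifold --- is precisely the content of Aubry's paper (whose title is ``Diameter pinching in almost positive Ricci curvature''), and it requires substantially more delicate arguments (segment inequalities, careful volume comparison, and an interlocking use of the distance function) than a one-line appeal to the literature. Your sketch has the correct shape but leaves the two hard steps unproved.
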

\begin{prooftheo} Using Gauss formula and the fact that $N$ is of constant sectional curvature $\delta$, we have
\begin{align*}\|\Ric-(n-1)(H^2+\delta) g\|_{r/2} &=\|\rbfi+nHB-B^2-(n-1)H^2 g-(n-1)\delta g\|_{r/2}\\
&=\|(n-2)H\tau-\tau^2\|_{r/2}\\
&\leqslant(n-2)\|H\|_r\|\tau\|_r+\|\tau\|_r^2\end{align*}
Now, putting $k=\|H\|_s^2+\delta$ for $2\leqslant r\leqslant s$, we get
\begin{align*}\|\Ric-(n-1)kg\|_{r/2}&\leqslant\|\Ric-(n-1)(H^2+\delta) g\|_{r/2}+(n-1)\sqrt{n}\left\|H^2-\|H\|_s^2\right\|_{r/2}\\
&\leqslant(n-2)\|H\|_r\|\tau\|_r+\|\tau\|_r^2+(n-1)\sqrt{n}\left\|H^2-\|H\|_s^2\right\|_{r/2}
\end{align*}
If $\|\tau\|_r\leqslant\|H\|_r\varepsilon$ and $\left\|H^2-\|H\|_s^2\right\|_{r/2}\leqslant\|H\|_r^2\varepsilon$ then
$$\|\Ric-(n-1)kg\|_{r/2}\leqslant K(n)\|H\|_r^2\varepsilon$$
for $\delta\neq 0$ we choose $s=\infty$. If $\delta\geqslant 0$ and $\varepsilon\leqslant K(n,r)$ (resp. $\varepsilon\leqslant K(n)\frac{h^2}{\nhinf^2}$ for $\delta<0$) the theorem \ref{Aubry} allows us to conclude that
$$\la\geqslant n(\|H\|_s^2+\delta)(1-C(n,r)\varepsilon)$$
$$(\text{resp}.\la\geqslant n(\nhinf^2+\delta)\left(1-C(n,r)\frac{\nhinf^2}{h^2}\varepsilon\right) \text{for}\ \delta<0)$$
Now the conclusion is immediate from Corollary \ref{lambda} and \cite{aubgrorot}.
\end{prooftheo}

\end{document}